\newtheorem{theorem}{Theorem}[section]
\newtheorem{lemma}[theorem]{Lemma}
\newtheorem{proposition}[theorem]{Proposition}
\newtheorem{corollary}[theorem]{Corollary}
\newtheorem{definition}[theorem]{Definition}
\theoremstyle{definition}
\newcommand{\ZZ}{\mathbb{Z}}
\newcommand{\RR}{\mathbb{R}}
\newcommand{\z}{\mathbf}
\newcommand{\one}{\mathbf 1}
\title{Unicyclic graphs and the inertia of the distance squared matrix}
\author{Christian Howell\thanks{Department of Mathematics, Brigham Young University, Provo, UT (\texttt{cahowell0@gmail.com}).}, Mark Kempton\thanks{Department of Mathematics, Brigham Young University, Provo, UT (\texttt{mkempton@mathematics.byu.edu}).}, Kellon Sandall\thanks{Department of Mathematics, Brigham Young University, Provo, UT (\texttt{kellon08@gmail.com}).}, and John Sinkovic\thanks{Department of Mathematics, Brigham Young University Idaho, Rexburg, ID (\texttt{sinkovicj@byui.edu}).}}
\date{}
\begin{document}

\maketitle

\begin{abstract}
    A result of Bapat and Sivasubramanian gives the inertia of the distance squared matrix of a tree.  We develop general tools on how pendant vertices and degree 2 vertices affect the inertia of the distance squared matrix and use these to give an alternative proof of this result.  We further use these tools to extend this result to certain families of unicyclic graphs, and we explore how far these results can be extended.
\end{abstract}

\noindent {\bf Keywords:} Distance squared matrix; inertia; unicyclic graphs.

\section{Introduction}

In spectral graph theory, we use the spectra of matrices associated to a graph to study properties of the graph.  There are many matrices that capture various graph properties, such as the adjacency matrix and the combinatorial Laplacian, which are well-studied.  A matrix that has not received as much attention is the \emph{distance squared matrix} $\Delta$.  For a graph on $n$ vertices, $\Delta$ is the $n\times n$ matrix whose entry $\Delta_{ij}$ is the square of the distance from vertex $i$ to vertex $j$ in the graph.  

When the graph is a tree, Bapat and Sivasubramanian \cite{BAPAT2016328} proved a remarkable connection between the graph and the inertia of its distance squared matrix.  Recall that the \emph{inertia} of a Hermitian $M$ is defined to be the triple $(i_+(M),i_-(M),i_0(M))$ where $i_+$, $i_-$, and $i_0$ are defined, respectively, to be the number (counting multiplicity) of positive eigenvalues, negative eigenvalues, and the multiplicity of 0 as an eigenvalue of $M$.  In \cite{BAPAT2016328} we find the following theorem.

\begin{theorem}[{\cite[Theorem 20]{BAPAT2016328}}]\label{thm:bapat}
Let $T$ be a tree with distance squared matrix $\Delta$, let $\ell$ be the number of vertices of degree 1 in $T$, and let $t$ be the number of vertices of degree 2 in $T$.  Then 
\[
i_0(\Delta)=\begin{cases}t-1 & \text{if } t\geq1\\0 &\text{if }t=0 \end{cases}
\]
and 
\[
i_-(\Delta)=\ell.
\]
\end{theorem}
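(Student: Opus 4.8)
The plan is to isolate the effect of the two special vertex types named in the statement and then run an induction on the number of vertices. The cornerstone is a single identity for degree-2 vertices. If $u$ has degree $2$ with neighbours $a$ and $b$, then deleting $u$ separates the tree into an $a$-side and a $b$-side, and for every vertex $z$ the three numbers $d(z,a),d(z,u),d(z,b)$ are consecutive integers along the unique path from $z$ through $u$. Since the discrete second difference of a quadratic is constant, a direct check gives
\[
\Delta(\mathbf{e}_a-2\mathbf{e}_u+\mathbf{e}_b)=2\cdot\mathbf{1},
\]
where $\mathbf{e}_i$ is the $i$-th standard basis vector and $\mathbf{1}$ is the all-ones vector. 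I would verify this by the cases $z$ on the $a$-side, $z$ on the $b$-side, and $z\in\{a,u,b\}$; each yields the value $2$.

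This identity immediately produces null vectors. Writing $\mathbf{w}_i=\mathbf{e}_{a_i}-2\mathbf{e}_{u_i}+\mathbf{e}_{b_i}$ for the $t$ degree-2 vertices $u_1,\dots,u_t$, every difference $\mathbf{w}_i-\mathbf{w}_j$ lies in $\ker\Delta$. The $\mathbf{w}_i$ are linearly independent (each carries the coefficient $-2$ at its own coordinate $u_i$, and the adjacency matrix of the induced paths among the degree-2 vertices has no eigenvalue $2$), so the differences span a space of dimension $t-1$; hence $i_0(\Delta)\ge t-1$ whenever $t\ge1$. The remaining and harder task is the matching bound $i_0(\Delta)\le t-1$, equivalently $\operatorname{rank}\Delta=n-t+1$, together with the count $i_-(\Delta)=\ell$.

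For these I would argue by induction on $n$, reducing a tree by two moves while tracking $(\ell,t)$: contracting a degree-2 vertex (deleting it and joining its neighbours) lowers $t$ by $1$ and fixes $\ell$, while deleting a leaf adjacent to a branch vertex of degree at least $3$ lowers $\ell$ by $1$ and fixes $t$; alternating these reduces any tree to a single edge. The base cases are small trees whose inertia I would compute directly — for instance the star $K_{1,m}$ has a leaf--leaf block equal to $4$ times (all-ones minus identity), contributing $m-1$ negative eigenvalues, plus a residual indefinite $2\times2$ block, giving $i_-=m=\ell$ and $i_0=0$. The degree-2 move should preserve $i_-$ and drop $i_0$ by exactly $1$ once $t\ge2$ (the first degree-2 vertex being ``free'', which is the source of the $t-1$ rather than $t$); the pendant move should raise $i_-$ by exactly $1$ and preserve $i_0$.

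The main obstacle is the pendant step. Attaching a pendant vertex $v$ at $u$ borders $\Delta$ with the column $\mathbf{b}$ whose entries are $(d(u,w)+1)^2=d(u,w)^2+2d(u,w)+1$, so $\mathbf{b}=\mathbf{a}+2\mathbf{d}+\mathbf{1}$, where $\mathbf{a}$ is the $u$-column of $\Delta$ and $\mathbf{d}$ is the vector of distances from $u$. The appearance of the bare distance vector $\mathbf{d}$ means the bordered matrix is not governed by $\Delta$ alone, so a Schur-complement or congruence analysis must control how $\mathbf{d}$ and $\mathbf{1}$ interact with $\Delta$; this is where the structure of the tree (and the identity above, applied both in $T$ and in the enlarged tree $T'$) must be brought in to show that exactly one negative eigenvalue appears and that no new nullity is created. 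Pinning down these exact constants, and thereby closing the gap between the easy lower bound $i_0(\Delta)\ge t-1$ and the exact values, is the crux of the argument.
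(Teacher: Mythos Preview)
Your opening move---the identity $\Delta(\mathbf{e}_a-2\mathbf{e}_u+\mathbf{e}_b)=2\cdot\mathbf{1}$ for a cut vertex $u$ of degree $2$, and the resulting lower bound $i_0(\Delta)\ge t-1$---is exactly what the paper proves as its Lemmas \ref{lem:tree_null}--\ref{lem:deg_2_verts}, so that part is on target.

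The gap is in your inductive reduction. Your ``contraction'' move (delete a degree-$2$ vertex $u$ and join its neighbours) does not yield a principal submatrix of $\Delta$: distances across $u$ all drop by one, so the squared-distance matrix of the contracted tree bears no submatrix or congruence relation to $\Delta$, and you have no interlacing or Sylvester-type handle on its inertia. Your other move is also not quite as claimed: deleting a leaf attached to a vertex of degree exactly $3$ turns that vertex into a degree-$2$ vertex, so $t$ goes \emph{up}, not stays fixed. Thus neither move behaves as your bookkeeping requires, and the induction as written does not close.

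The paper's induction avoids both issues by only ever deleting a leaf (never contracting), which always produces a principal submatrix and hence admits Cauchy interlacing. The leaf is chosen at the end of a diametrical path, where the penultimate vertex $w$ has one non-leaf neighbour $u$ and $k\ge1$ leaf neighbours. If $k\ge2$, a congruence (row/column operations among the $k$ identical pendant rows) shows $\Delta$ is congruent to $(-4I_{k-1}-4J_{k-1})\oplus\bigl(\Delta_k+(4-4/k)\mathbf{e}_k\mathbf{e}_k^\top\bigr)$; interlacing bounds $i_-$ of the second block from below, and subadditivity of inertia (adding a rank-one positive semidefinite perturbation cannot create negative eigenvalues) bounds it from above, pinning $i_-(\Delta)=\ell$ exactly. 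If $k=1$, then $w$ itself has degree $2$; deleting the unique leaf $v$ turns $w$ into a leaf, so $\ell$ is unchanged and $t$ drops by one, and your very identity (applied at the other end of the diameter, which also ends in a degree-$2$ vertex) shows the $v$-column lies in the span of the remaining columns, so $\operatorname{rank}\Delta=\operatorname{rank}\Delta(1)$. Combined with interlacing this forces $i_\pm(\Delta)=i_\pm(\Delta(1))$ and hence $i_0(\Delta)=i_0(\Delta(1))+1$. This is how the paper sidesteps the bordered-matrix analysis with the raw distance vector $\mathbf{d}$ that you flagged as the crux: the Schur complement is never needed, because the two cases above together cover every tree of diameter at least $3$.
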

While many theorems in spectral graph theory bound some graph parameter in terms of eigenvalues, Theorem \ref{thm:bapat} is striking in that only the inertia of a matrix is needed to get an exact enumeration of certain kinds of vertices in the tree.

The purpose of this paper is to extend Theorem \ref{thm:bapat} to families of graphs other than trees.  A natural first step is to consider \emph{unicyclic graphs}, that is, graphs with only one cycle.  As we will see below, while there is no single, unifying theorem characterizing the inertia of all unicyclic graphs, there are various families of unicyclic graphs where results analogous to Theorem \ref{thm:bapat} hold.

The remainder of the paper is organized as follows.  We begin Section \ref{sec:prelim} with needed matrix theoretic tools and preliminaries.  Then in Section \ref{sec:general} we prove some technical lemmas about degree 1 and 2 vertices that apply to any graph.  We then use these tools in Section \ref{sec:tree} to give an alternative, simpler proof to Theorem \ref{thm:bapat}.  Then we begin our investigation of unicyclic graphs in Section \ref{sec:cycle} by giving the complete spectrum (and hence the inertia) of the distance squared matrix of any cycle.  Section \ref{sec:1even} contains the main result of the paper, which is that if $U$ is a unicyclic graph whose cycle is of even length $2q$ and whose cycle contains only one vertex of degree three in $U$, and if $\Delta$ is the distance squared matrix of $U$, then $i_-(\Delta)=\ell+q$ and $i_0(\Delta)=t$.  This is analogous to Theorem \ref{thm:bapat}.  In Section 7, we study several other families of unicyclic graphs that show that this result cannot be extend to all unicyclic graphs.  We end in Section \ref{sec:conc} with a discussion of open questions.

\section{Preliminaries}\label{sec:prelim}

Let $G$ be a connected graph with vertex set $V(G) = \{1, \ldots, n\}$ and edge set $E(G)$. We define the distance between vertices $i, j \in V(G)$, $d_{ij}$, as the minimum number of edges in a path from $i$ to $j$. We will set $d_{ii} = 0$ for $i = 1, \ldots, n$. We will define the distance matrix $D(G)$ or $D$ of the graph as the matrix where each $(i, j)$-entry is equal to $d_{ij}$. Throughout this paper we will be mainly using the distance squared matrix, $\Delta$, which we will define as the Hadamard product $D \circ D$. On occasion, we will also use the notation $A(i)$ which refers to some matrix $A$ with the $i$th column and $i$th row deleted.  Let $\one$ denote the vector whose entries are all equal to 1, and $J_{n}$ will denote the $n\times n$ matrix whose entries are all 1.

We list here some tools from matrix theory that we will use. We will use the Cauchy Interlacing Theorem extensively, which is

\begin{theorem}[Theorem 4.3.28 of \cite{horn2012matrix}]
Suppose $A \in \RR^{n \times n}$ is symmetric. Let $B \in \RR^{m \times m}$ with $m < n$ be a principal submatrix (obtained by deleting both the $i$-th row and $i$-th column for some values of $i$). Suppose $A$ has eigenvalues $\lambda_1 \leq \cdots \leq \lambda_n$ and $B$ has eigenvalues $\beta_1 \leq \cdots \leq \beta_m$. Then
\begin{equation*}
    \lambda_k \leq \beta_k \leq \lambda_{k + n - m} \quad \quad \text{for} \quad \quad k = 1, \ldots, m
.\end{equation*}
If $m = n - 1$,
\begin{equation*}
    \lambda_1 \leq \beta_1 \leq \lambda_2 \leq \beta_2 \leq \cdots \leq \beta_{n - 1} \leq \lambda_n
.\end{equation*}
\end{theorem}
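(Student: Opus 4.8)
The plan is to derive both inequalities from the Courant--Fischer variational characterization of the eigenvalues of a real symmetric matrix. Recall that this characterization states, for $A$ with eigenvalues $\lambda_1 \le \cdots \le \lambda_n$ and any $1 \le k \le n$,
\[
\lambda_k = \min_{\dim S = k} \ \max_{\substack{x \in S \\ x \neq 0}} \frac{x^\top A x}{x^\top x} = \max_{\dim S = n-k+1} \ \min_{\substack{x \in S \\ x \neq 0}} \frac{x^\top A x}{x^\top x},
\]
where $S$ ranges over subspaces of $\RR^n$ of the indicated dimension. Taking this as the main tool, the remaining argument is largely bookkeeping with subspaces and indices.

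First I would reduce to the case where $B$ is the leading principal submatrix. Since deleting the same set of rows and columns from $A$ is the same as conjugating $A$ by a permutation matrix and then deleting the trailing block, and permutation similarity preserves the spectrum, I may assume $B$ is the top-left $m \times m$ block of $A$. Let $W = \{x \in \RR^n : x_{m+1} = \cdots = x_n = 0\}$, a subspace of dimension $m$ naturally identified with $\RR^m$. The key observation is that for $x \in W$ with leading coordinates $y \in \RR^m$ one has $x^\top A x = y^\top B y$ and $x^\top x = y^\top y$, so the Rayleigh quotient of $B$ at $y$ equals the Rayleigh quotient of $A$ at the corresponding $x \in W$.

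For the lower bound $\lambda_k \le \beta_k$, I apply the min form of Courant--Fischer to $B$: its value is a minimum, over the $k$-dimensional subspaces of $\RR^m \cong W$, of the maximal Rayleigh quotient. Enlarging the feasible set to \emph{all} $k$-dimensional subspaces of $\RR^n$ can only decrease this minimum, and the enlarged minimum is exactly $\lambda_k$; hence $\lambda_k \le \beta_k$. For the upper bound $\beta_k \le \lambda_{k+n-m}$, I apply the max form to $B$, whose value is a maximum, over the $(m-k+1)$-dimensional subspaces of $W$, of the minimal Rayleigh quotient. Enlarging the feasible set to all $(m-k+1)$-dimensional subspaces of $\RR^n$ can only increase this maximum, and by Courant--Fischer the enlarged maximum equals $\lambda_j$ with $n - j + 1 = m - k + 1$, that is $j = k + n - m$; hence $\beta_k \le \lambda_{k+n-m}$. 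The special case $m = n-1$ then follows immediately by chaining the two families of inequalities.

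I expect the main obstacle to be conceptual rather than computational: establishing the Courant--Fischer characterization itself (via the spectral theorem together with a dimension-counting argument showing that any $k$-dimensional subspace meets the span of a suitable family of eigenvectors nontrivially), and then tracking the index shift $n - j + 1 = m - k + 1 \Rightarrow j = k + n - m$ correctly in the max form. Once the correspondence between a principal submatrix and the restriction of the quadratic form to the coordinate subspace $W$ is in place, both inequalities fall out from the monotonicity of $\min$ and $\max$ under enlarging the family of competing subspaces.
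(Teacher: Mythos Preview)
Your argument via the Courant--Fischer variational characterization is correct and is the standard proof of the Cauchy interlacing theorem. Note, however, that the paper does not prove this statement at all: it is quoted as a known tool from Horn and Johnson's \emph{Matrix Analysis} and used as a black box throughout. There is therefore no ``paper's proof'' to compare against; your proposal simply supplies what the paper takes for granted.
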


The following definition contains a shorthand for expressing inertia for the remainder of the paper. 

\begin{definition}
The inertia, $i(M)$, of a matrix $M$ is the triple 
\begin{equation*}
	i(M) = (i_+(M), i_-(M), i_0(M))
\end{equation*}
where $i_+$, $i_-$, and $i_0$ represent the number of positive, negative, and zero eigenvalues respectively. 
\end{definition} 

We will also use the subadditivity property of inertia which is

\begin{theorem}[Proposition 3.9 of \cite{inertia13001}]
Let $A, B \in \RR^{n \times n}$ be symmetric. Then we have 
\begin{equation*}
    i_+(A + B) \leq i_+(A) + i_+(B)
\end{equation*}
and
\begin{equation*}
    i_-(A + B) \leq i_-(A) + i_-(B),
\end{equation*}
where $i_+$ and $i_-$ denote the number of positive and negative eigenvalues respectively.
\end{theorem}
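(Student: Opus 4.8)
The plan is to prove the inequality for $i_+$ directly and then deduce the inequality for $i_-$ by applying the $i_+$ statement to $-A$ and $-B$, using the elementary identity $i_-(M)=i_+(-M)$. The whole argument rests on a variational characterization of $i_+$ that I would establish first: for a symmetric $M\in\RR^{n\times n}$,
\[
i_+(M)=\max\{\dim W : W\subseteq\RR^n \text{ a subspace with } x^\top M x>0 \text{ for all } x\in W\setminus\{0\}\}.
\]
To prove this, I would invoke the spectral theorem to obtain an orthonormal eigenbasis. Taking $W$ to be the span of the eigenvectors with positive eigenvalues shows the maximum is at least $i_+(M)$. For the reverse inequality, I would let $Z$ be the span of the eigenvectors with nonpositive eigenvalues, so $\dim Z = n - i_+(M)$ and $x^\top M x \le 0$ for all $x\in Z$; if some positive-definite subspace $W$ had dimension exceeding $i_+(M)$, then $\dim W + \dim Z > n$ would force a nonzero $x\in W\cap Z$, contradicting $x^\top M x>0$ and $x^\top M x\le 0$ simultaneously.

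With this characterization in hand, the subadditivity argument is a dimension count. Write $p=i_+(A)$ and $q=i_+(B)$, and let $U$ be the span of the eigenvectors of $A$ with nonpositive eigenvalues (so $\dim U = n-p$ and $x^\top A x\le 0$ on $U$), and similarly let $V$ be the corresponding subspace for $B$ with $\dim V = n-q$. Suppose toward a contradiction that $i_+(A+B)\ge p+q+1$, so by the characterization there is a subspace $W$ with $\dim W = p+q+1$ on which $A+B$ is positive definite. Applying the inequality $\dim(X\cap Y)\ge \dim X + \dim Y - n$ twice gives
\[
\dim(W\cap U\cap V)\ge (p+q+1)+(n-p)+(n-q)-2n = 1,
\]
so there exists a nonzero vector $x\in W\cap U\cap V$. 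Then $x^\top(A+B)x>0$ because $x\in W$, while $x^\top(A+B)x = x^\top A x + x^\top B x \le 0$ because $x\in U$ and $x\in V$ — a contradiction. Hence $i_+(A+B)\le p+q$, which is the first inequality.

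I expect the main obstacle to be purely expository rather than conceptual: stating and proving the subspace characterization of $i_+$ cleanly, and being careful that the two subspaces $U$ and $V$ are chosen to be genuinely negative \emph{semidefinite} (including the kernel), so that the strict inequality on $W$ collides with a non-strict inequality on $U\cap V$. The iterated Grassmann dimension inequality $\dim(X\cap Y)\ge \dim X+\dim Y-n$ is the other point to state precisely, since the whole contradiction hinges on producing a single nonzero common vector. Once these two ingredients are in place, the $i_-$ inequality requires only the remark that negating both matrices interchanges the roles of positive and negative eigenvalues, so no separate argument is needed.
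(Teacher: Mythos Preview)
Your proof is correct. The variational characterization of $i_+$ via maximal positive-definite subspaces is standard and your derivation of it from the spectral theorem is sound; the dimension-count argument using the iterated Grassmann inequality is clean and the contradiction is genuine, since you correctly took $U$ and $V$ to include the kernels (nonpositive rather than strictly negative eigenspaces). The reduction of the $i_-$ inequality to the $i_+$ case via $M\mapsto -M$ is also fine.

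There is nothing to compare against in the paper itself: the statement is quoted there as Proposition~3.9 of an external reference and used as a black box, with no proof supplied. Your argument is the standard one and would serve perfectly well as the omitted proof.
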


We will use Sylvester's Law of Inertia in many of the row and column operations we perform:

\begin{theorem}[Theorem 4.5.8 in \cite{horn2012matrix}]\label{thm:sylvester}
	Let $A$ be a symmetric $n \times n$ matrix and let $S$ be a nonsingular $n \times n$ matrix. Then $i(A) = i(S^T A S)$. 
\end{theorem}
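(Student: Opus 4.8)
The plan is to prove that congruence preserves each component of the inertia triple. Since $i_+(A) + i_-(A) + i_0(A) = n = i_+(S^T A S) + i_-(S^T A S) + i_0(S^T A S)$, it suffices to show that $i_0$ and $i_+$ are individually preserved; the invariance of $i_-$ then follows automatically. The $i_0$ part I would dispose of first and cheaply: because $S$ is nonsingular, left and right multiplication by $S^T$ and $S$ do not change rank, so $\mathrm{rank}(S^T A S) = \mathrm{rank}(A)$, and since $i_0(M) = n - \mathrm{rank}(M)$ for any symmetric $M$, we get $i_0(S^T A S) = i_0(A)$ at once.

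The substance is the invariance of $i_+$, and here I would use an extremal characterization of $i_+$ that never references the eigenvalues of the congruent matrix directly. Specifically, I would establish that
\[
i_+(A) = \max\{\dim W : W \text{ a subspace of } \RR^n \text{ with } x^T A x > 0 \text{ for all } x \in W \setminus \{0\}\}.
\]
To prove this, apply the spectral theorem to write $A = Q\Lambda Q^T$ with $Q$ orthogonal and $\Lambda$ diagonal. The span $W_+$ of the eigenvectors for positive eigenvalues has dimension $i_+(A)$ and the form is positive on it, giving the lower bound. For the upper bound, let $U$ be the span of the eigenvectors for nonpositive eigenvalues, so $\dim U = n - i_+(A)$ and $x^T A x \le 0$ on $U$; any subspace $W$ with $\dim W > i_+(A)$ must meet $U$ nontrivially because $\dim W + \dim U > n$, producing a nonzero vector on which the form is $\le 0$, so no positive-definite subspace can exceed dimension $i_+(A)$.

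With the characterization in hand, the conclusion is a one-line bijection argument. For the congruent matrix we have $y^T (S^T A S) y = (Sy)^T A (Sy)$, so a subspace $W'$ is positive-definite for $S^T A S$ precisely when its image $S W'$ is positive-definite for $A$. Since $S$ is invertible, the map $W' \mapsto S W'$ is a dimension-preserving bijection on subspaces, so the two maxima coincide and $i_+(S^T A S) = i_+(A)$, completing the proof.

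I expect the main obstacle to be proving the extremal characterization cleanly, in particular the upper bound, which rests on the dimension inequality $\dim W + \dim U > n \Rightarrow W \cap U \neq \{0\}$ combined with the sign data from the spectral decomposition; everything downstream of that is formal. An alternative route would exploit the path-connectedness of the nonsingular matrices of fixed signature, but the subspace argument is more self-contained and avoids any topological input.
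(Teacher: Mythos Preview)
Your argument is correct and is essentially the standard proof of Sylvester's Law of Inertia via the extremal subspace characterization of $i_+$. There is nothing to compare against here, however: the paper does not supply its own proof of this statement. It is listed in Section~\ref{sec:prelim} as a cited preliminary (Theorem~4.5.8 of \cite{horn2012matrix}) and used as a black box throughout. So your proposal fills in a proof where the paper simply quotes the result.
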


We remark that $A$ and $S^TAS$ are called \emph{congruent} matrices, so Sylvester's Law of Inertia states that congruent matrices have the same inertia. We also have the Haynsworth inertia additivity formula:

\begin{theorem}[Theorem 4.5.P21 in \cite{horn2012matrix}]\label{thm:haynsworth_additivity_formula}
    Let $H$ be a Hermitian matrix given by
    \begin{equation*}
        H = \begin{bmatrix}
            H_{11} & H_{12} \\
            H_{12}^* & H_{22}
        \end{bmatrix},
    \end{equation*}
    where $H_{11}$ is nonsingular and $H_{12}^*$ is the conjugate transpose of $H_{12}$. Then
    \begin{equation*}
        i(H)=i(H_{11}) + i(H_{22} - H_{12}^*H_{11}^{-1}H_{12})
    .\end{equation*}
\end{theorem}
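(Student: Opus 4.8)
The plan is to diagonalize $H$ by a single block congruence and then invoke Sylvester's Law of Inertia (Theorem \ref{thm:sylvester}). The key observation is that because $H_{11}$ is nonsingular, one can perform block Gaussian elimination to clear the off-diagonal blocks $H_{12}$ and $H_{12}^*$, and the resulting $(2,2)$ block is exactly the Schur complement $H_{22} - H_{12}^* H_{11}^{-1} H_{12}$.

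Concretely, I would set
\[
S = \begin{bmatrix} I & -H_{11}^{-1} H_{12} \\ 0 & I \end{bmatrix},
\]
which is nonsingular since it is block upper triangular with identity diagonal blocks (its determinant is $1$). First I would compute $S^* H S$ directly. Using that $H_{11}$ is Hermitian, so that $(H_{11}^{-1})^* = H_{11}^{-1}$, the off-diagonal blocks cancel: the $(1,2)$ block becomes $H_{11}(-H_{11}^{-1}H_{12}) + H_{12} = 0$, and the $(2,1)$ block is its conjugate transpose. The $(2,2)$ block collects the three terms $H_{12}^* H_{11}^{-1} H_{12} - H_{12}^* H_{11}^{-1} H_{12} - H_{12}^* H_{11}^{-1} H_{12} + H_{22}$, which simplifies to $H_{22} - H_{12}^* H_{11}^{-1} H_{12}$. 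Hence
\[
S^* H S = \begin{bmatrix} H_{11} & 0 \\ 0 & H_{22} - H_{12}^* H_{11}^{-1} H_{12} \end{bmatrix}.
\]

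By Sylvester's Law of Inertia (Theorem \ref{thm:sylvester}), the matrices $H$ and $S^*HS$ are congruent and therefore share the same inertia, so $i(H) = i(S^*HS)$. Finally, since the spectrum of a block-diagonal Hermitian matrix is the union (with multiplicity) of the spectra of its diagonal blocks, the positive, negative, and zero eigenvalue counts add component-wise, giving $i(S^*HS) = i(H_{11}) + i(H_{22} - H_{12}^* H_{11}^{-1} H_{12})$. Combining the two displays yields the claim.

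I do not expect a genuine obstacle here: the only real content is recognizing the correct congruence $S$, after which every step is mechanical. The one point that warrants care is the use of $H_{11}$ being Hermitian to guarantee $(H_{11}^{-1})^* = H_{11}^{-1}$, which is precisely what forces the $(2,1)$ block to vanish simultaneously with the $(1,2)$ block and keeps the Schur complement Hermitian so that its inertia is well defined.
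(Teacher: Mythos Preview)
Your argument is correct and is the standard block-congruence proof of the Haynsworth formula. Note, however, that the paper does not actually supply its own proof of this statement: it is quoted as a preliminary result from Horn and Johnson, so there is no in-paper proof to compare against. One minor point worth flagging is that the paper's Theorem~\ref{thm:sylvester} is stated for real symmetric matrices with $S^T$, whereas you (correctly) apply the Hermitian version with $S^*$; the extension is routine, but strictly speaking you are using a slightly more general form of Sylvester's law than the one the paper records.
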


When we deal with cycles, we will often use circulant matrices which are matrices in which the first row is repeated for each row of the matrix except shifted one entry to the right. They will take the form
\begin{equation*}
    \begin{bmatrix}
        a_1 & a_2 & \cdots & a_{n - 1} & a_n \\
        a_n & a_1 & \cdots & a_{n - 2} & a_{n - 1} \\
        \vdots & \vdots & \ddots & \vdots & \vdots \\
        a_2 & a_3 & \cdots & a_n & a_1 \\
    \end{bmatrix}
.\end{equation*}
To abbreviate, we will also express circulant matrices as
\begin{equation*}
    \begin{bmatrix}
        a_1 & a_2 & \cdots & a_{n - 1} & a_n \\
        & \ddots & \ddots & \ddots
    \end{bmatrix}
\end{equation*}
throughout the paper. The following lemma gives the eigenvectors and eigenvalues of circulant matrices.

\begin{lemma}[Theorem 2.2.P10 of \cite{horn2012matrix}]\label{lem:eigenstuff_of_circulant_matrices}
    Let $A$ be the circulant matrix above. The eigenvectors will take the form
    \begin{equation*}
        \mathbf{v}_j = \frac 1 {\sqrt{n}} \begin{bmatrix}
            1 & \omega^j & \omega^{2j} & \cdots & \omega^{(n - 1)j}
        \end{bmatrix}^\top
        \quad \quad \quad
        j = 1, 2, \ldots, n,
    \end{equation*}
    where $\omega = \exp\left(\frac{2\pi i}{n}\right)$ is a primitive $n$-th root of unity and $i$ is the imaginary unit.
    
    The eigenvalues will take the form 
    \begin{equation*}
        \lambda_j = a_1 + a_2 \omega^j + \cdots + a_n \omega^{(n - 1)j}
        \quad \quad \quad
        j = 1, 2, \ldots, n.
    \end{equation*}
\end{lemma}

\section{Degree 1 and 2 Vertices}\label{sec:general}

In this section, we will prove some preliminary results that will apply to general graphs that have multiple degree one vertices that are all neighbors, and degree two vertices that are part of a ``branch" of the graph.

Let $G$ be any graph obtained by attaching $k\geq2$ pendant vertices to a vertex of $w$ of some graph $G'$ and let $\Delta$ be the distance squared matrix of $G$.  We will order the vertices so that vertices $1,...,k$ are the pendant vertices.  Denote by $\Delta_{k+1}$ the distance squared matrix of $G\backslash\{1,...,k\}=G'$ and $\Delta_k$ the distance squared matrix of $G\backslash \{1,...,k-1\}$. 

\begin{proposition}\label{lem:kpend}
 In the above notation, we have
 \[
 i_-(\Delta) = k-1 + i_-\left(\Delta_{k} + \left(4-\frac{4}{k}\right)\mathbf{e}_k\mathbf{e}_k^T\right)
 \]
 where $e_k$ denotes the indicator vector for the $k$th vertex.
\end{proposition}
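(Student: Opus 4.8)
The plan is to exploit the fact that all $k$ pendant rows of $\Delta$ are identical outside the pendant block, so that the coupling between the pendants and the rest of the graph has rank one. Ordering the pendants $1,\dots,k$ first, I would write
\[
\Delta = \begin{bmatrix} 4(J_k - I_k) & \one_k\mathbf{b}^T \\ \mathbf{b}\one_k^T & \Delta_{k+1}\end{bmatrix},
\]
where $\mathbf{b}$ is the common vector of squared distances from a pendant to the vertices of $G'$. This block form is legitimate because each pendant has $w$ as its only neighbor, so $d_{iv}=1+d_{wv}$ for every $v\in G'$ and every pendant $i$, and two distinct pendants are at distance $2$ (hence squared distance $4$). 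The key observation is that the off-diagonal blocks are $\one_k\mathbf{b}^T$ and its transpose, so the pendant space couples to $\Delta_{k+1}$ only through the direction $\one_k$.

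First I would apply an orthogonal congruence acting only on the pendant coordinates, using the basis consisting of $\one_k/\sqrt{k}$ together with any orthonormal basis of $\one_k^\perp$. Since $J_k$ annihilates $\one_k^\perp$, the block $4(J_k-I_k)$ restricts to $-4I_{k-1}$ there, and the coupling $\mathbf{b}\one_k^T$ also annihilates $\one_k^\perp$; hence this congruence turns $\Delta$ into a genuine block-diagonal matrix whose blocks are $-4I_{k-1}$ and
\[
\begin{bmatrix} 4(k-1) & \sqrt{k}\,\mathbf{b}^T \\ \sqrt{k}\,\mathbf{b} & \Delta_{k+1}\end{bmatrix},
\]
where the top-left scalar is $(\one_k/\sqrt k)^T 4(J_k-I_k)(\one_k/\sqrt k)=4(k-1)$ and the coupling row is $(\one_k/\sqrt k)^T\one_k\mathbf{b}^T=\sqrt k\,\mathbf{b}^T$. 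By Sylvester's Law of Inertia (Theorem~\ref{thm:sylvester}) the inertia is unchanged, and since $-4I_{k-1}$ is decoupled and contributes exactly $k-1$ negative eigenvalues, additivity of inertia over a direct sum (equivalently Theorem~\ref{thm:haynsworth_additivity_formula}) gives $i_-(\Delta)=k-1+i_-$ of the displayed $2\times 2$-block matrix.

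It then remains to match this reduced matrix with the asserted one. I would apply one further diagonal congruence, scaling the first coordinate by $1/\sqrt k$; this sends the reduced matrix to $\begin{bmatrix} 4(k-1)/k & \mathbf{b}^T \\ \mathbf{b} & \Delta_{k+1}\end{bmatrix}$, and since $4(k-1)/k = 4-\tfrac4k$ while $\Delta_k=\begin{bmatrix} 0 & \mathbf{b}^T \\ \mathbf{b} & \Delta_{k+1}\end{bmatrix}$ is precisely the distance squared matrix of $G'$ with the single pendant $k$ reattached, this matrix is exactly $\Delta_k+\left(4-\tfrac4k\right)\mathbf{e}_k\mathbf{e}_k^T$. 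Composing the two congruences yields the claim. The only real content is the first step: recognizing the rank-one coupling and choosing the $\one_k$ versus $\one_k^\perp$ decomposition that simultaneously diagonalizes the pendant block and decouples it from $\Delta_{k+1}$. Once that is in place everything else is bookkeeping through Sylvester's law and a rescaling, so I do not anticipate a serious obstacle beyond correctly tracking the constant $4-\tfrac4k$.
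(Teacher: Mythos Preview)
Your argument is correct and follows the same overall strategy as the paper: exploit that the $k$ pendant rows agree outside the pendant block, decouple $k-1$ of them by a congruence, and identify the residual block as $\Delta_k+(4-\tfrac4k)\mathbf{e}_k\mathbf{e}_k^T$. The only difference is in the choice of congruence. The paper performs explicit elementary row and column operations---subtracting row $k$ from rows $1,\dots,k-1$ and then averaging back---which leaves $-4I_{k-1}-4J_{k-1}$ as the decoupled negative-definite block. You instead pass to the orthogonal eigenbasis of $J_k$ (the direction $\one_k/\sqrt{k}$ together with $\one_k^\perp$), which diagonalizes the pendant block directly and leaves the cleaner $-4I_{k-1}$; a final $1/\sqrt{k}$ rescaling of the surviving pendant coordinate then recovers the same residual matrix. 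Your route is a bit more conceptual (it makes the rank-one nature of the coupling explicit), while the paper's is more hands-on; neither gains or loses anything substantive, and both land on exactly the same identity.
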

\begin{proof}
We have

\[\Delta = \begin{bmatrix}
    4J_k-4I_k & X^T\\
    \\
    X & \Delta_{k + 1}
\end{bmatrix},
\]
    where $\Delta_{k+1}$ is the square distance matrix for $G \backslash \{1,...,k\}$ and $X$ is a $(n - k) \times k$ matrix with all identical columns.

We will proceed by performing row operations and the corresponding column operations to produce a matrix that is congruent to $\Delta$ (and hence has the same inertia) whose inertia will be easier to see.  Note that performing a row operation and its corresponding column operation corresponds to multiplying the matrix by an elementary matrix on the left and its transpose on the right, and is thus a congruence.  

First we subtract row $k$ and column $k$ from each of the first $k-1$ rows and columns.  This will zero out the first $k-1$ rows of $X^T$ and the first $k-1$ columns of $X$ and will leave us with $-4I_{k-1}-4J_{k-1}$ in the upper left corner. We then subtract $1/k$ times each of the first $k-1$ rows from row $k$, and similarly to the columns.  We then find that $\Delta$ is congruent to

\[A = \begin{bmatrix}
    -4I_{k-1}-4J_{k-1} & 0\\
    \\
    0 & (4 - {4/k})\mathbf{e}_k\mathbf{e}_k^T + \Delta_k
\end{bmatrix},\]
with $\Delta_k$ being the square distance matrix for $G \backslash \{1,...,k-1\}$. Notice that $-4I_{k-1}-4J_{k-1}$ is negative definite and so has $k-1$ negative eigenvalues.  Then by Sylvester's Law of Inertia (Thereom \ref{thm:sylvester}), $i(\Delta)=i(A)$ and the result follows.
\end{proof}

\begin{corollary}
In the above setting, we have that either $i_-(\Delta) = k-1 + i_-(\Delta_{k+1})$ or $i_-(\Delta)=k+i_-(\Delta_{k+1})$.
\end{corollary}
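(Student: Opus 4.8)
The plan is to start from Proposition~\ref{lem:kpend}, which already reduces the computation of $i_-(\Delta)$ to understanding a single number: setting $M := \Delta_k + (4 - 4/k)\mathbf{e}_k\mathbf{e}_k^T$, we have $i_-(\Delta) = k - 1 + i_-(M)$. So the entire corollary follows once I show that $i_-(M)$ equals either $i_-(\Delta_{k+1})$ or $i_-(\Delta_{k+1}) + 1$. The first thing I would record is that the rank-one term $(4 - 4/k)\mathbf{e}_k\mathbf{e}_k^T$ alters only the diagonal entry of $\Delta_k$ in the position corresponding to vertex $k$; every off-diagonal entry of $M$ agrees with the corresponding entry of $\Delta_k$.

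The key structural observation is that $\Delta_{k+1}$ is a principal submatrix of $M$ of size one smaller. Indeed, $\Delta_{k+1}$ is obtained from $\Delta_k$ by deleting the row and column indexed by vertex $k$, and since $M$ and $\Delta_k$ differ only in the $(k,k)$-entry, deleting that same row and column from $M$ also yields exactly $\Delta_{k+1}$. I can therefore apply the Cauchy Interlacing Theorem to the pair $(M, \Delta_{k+1})$, in which $\Delta_{k+1}$ has precisely one fewer row and column than $M$. Writing the eigenvalues of $M$ as $\lambda_1 \le \cdots \le \lambda_s$, where $s = n - k + 1$ is the size of $M$, and those of $\Delta_{k+1}$ as $\beta_1 \le \cdots \le \beta_{s-1}$, interlacing gives $\lambda_1 \le \beta_1 \le \lambda_2 \le \cdots \le \beta_{s-1} \le \lambda_s$.

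From this interlacing I would read off the inertia bound $i_-(\Delta_{k+1}) \le i_-(M) \le i_-(\Delta_{k+1}) + 1$ by a short sign count. Each negative $\beta_i$ forces $\lambda_i \le \beta_i < 0$, so the negative eigenvalues of $\Delta_{k+1}$ inject into the negative eigenvalues of $M$, giving the lower bound. Conversely, each negative $\lambda_{i+1}$ forces $\beta_i \le \lambda_{i+1} < 0$, so the only negative eigenvalue of $M$ not matched to a negative eigenvalue of $\Delta_{k+1}$ is possibly $\lambda_1$, giving the upper bound. Hence $i_-(M) \in \{\, i_-(\Delta_{k+1}),\ i_-(\Delta_{k+1}) + 1 \,\}$, and substituting into $i_-(\Delta) = k - 1 + i_-(M)$ yields exactly the two stated cases.

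I expect the only real care-point, rather than a genuine obstacle, to be the bookkeeping that justifies the submatrix claim together with the translation of interlacing into an $i_-$ bound. In particular, I want to be certain that the modification is purely diagonal, so that removing vertex $k$'s row and column recovers $\Delta_{k+1}$ independently of the added term, and that the interlacing inequalities are applied with the correct index shift, so that at most one negative eigenvalue can be gained or lost when passing between $M$ and its codimension-one principal submatrix.
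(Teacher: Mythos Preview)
Your proposal is correct and follows essentially the same approach as the paper: both invoke Proposition~\ref{lem:kpend}, observe that $\Delta_{k+1}$ is the principal submatrix of $M = \Delta_k + (4-4/k)\mathbf{e}_k\mathbf{e}_k^T$ obtained by deleting the row and column for vertex $k$, and then apply Cauchy interlacing to conclude $i_-(M) \in \{i_-(\Delta_{k+1}),\, i_-(\Delta_{k+1})+1\}$. Your write-up simply fills in the details that the paper leaves implicit.
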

\begin{proof}
Since $\Delta_{k+1}$ results from deleting a row and column of $\Delta_k+(4-4/k)\mathbf{e}_k\mathbf{e}_k^T$, then the result follows from Proposition \ref{lem:kpend} and interlacing.
\end{proof}

In summary, adding $k\geq2$ pendant vertices to a single vertex will increase the number of negative eigenvalues of the distance squared matrix by at least $k-1$, and possibly by $k$.  Examples of both can occur, as we will see, and much of the work in what follows will be to determine if the increase is by $k-1$ or $k$ in specific kinds of graphs.

\begin{lemma}\label{lem:tree_null} For a graph $G$ with distance squared matrix $\Delta$, let $v$ be a vertex of degree $2$ such that its removal disconnects the graph.  Let $u$ and $w$ be the neighbors of $v$, and let $\mathbf{x}$ be a vector in $\mathbb{R}^n$ such that 

\[ x_i = \begin{cases}
    1 & \text{ if } i \text{ corresponds to vertex } u, w \\
    -2 & \text{ if } i \text{ corresponds to vertex } v\\
    0 & \text{ otherwise.} 
    \end{cases}
\]
Then $\Delta \mathbf{x}=2\cdot\one$.
\end{lemma}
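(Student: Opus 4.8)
The plan is to compute the $i$-th entry of $\Delta\mathbf{x}$ directly and show it equals $2$ for every vertex $i$. Since $\mathbf{x}$ is supported only on $u$, $w$, and $v$, the matrix-vector product expands to
\[
(\Delta\mathbf{x})_i = d_{iu}^2 + d_{iw}^2 - 2d_{iv}^2,
\]
so the whole statement reduces to verifying this scalar identity, case by case, according to the location of vertex $i$.

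First I would record the structural consequence of the hypotheses. Because $v$ has degree $2$ with neighbors $u,w$, each component of $G\backslash\{v\}$ must contain a neighbor of $v$, so there are at most two components; since removing $v$ disconnects $G$, there are exactly two, one containing $u$ and one containing $w$. Call these $C_u$ and $C_w$. Every vertex other than $v$ lies in exactly one of them, and $d_{uw}=2$ since the only $u$--$w$ path in $G$ passes through $v$.

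The key step is then to pin down the three distances appearing in the identity. For $i\in C_u$, any shortest path from $i$ to $v$ must enter $v$ through one of its two neighbors; entering through $w$ would force the path to cross from $C_u$ to $C_w$, which is impossible without first revisiting $v$. Hence a shortest $i$--$v$ path ends with the edge $uv$, giving $d_{iv}=d_{iu}+1$. The same cut-vertex reasoning shows every $i$--$w$ path runs through $v$, so $d_{iw}=d_{iv}+1=d_{iu}+2$. Substituting these and writing $a=d_{iu}$, the identity becomes $a^2+(a+2)^2-2(a+1)^2=2$, which holds for all $a$. By the symmetric argument the same conclusion holds for $i\in C_w$, the cases $i=u,w$ follow from the same substitutions (taking $a=0$), and $i=v$ is immediate since $d_{vu}=d_{vw}=1$ and $d_{vv}=0$ give $1+1-0=2$.

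I expect the only genuine obstacle to be the rigorous justification of the distance relations $d_{iv}=d_{iu}+1$ and $d_{iw}=d_{iu}+2$; once the cut-vertex structure is used to rule out shortest paths that re-enter $v$ or cross improperly between $C_u$ and $C_w$, the remainder is the trivial polynomial identity above.
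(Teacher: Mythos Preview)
Your proof is correct and follows essentially the same approach as the paper: compute $(\Delta\mathbf{x})_i = d_{iu}^2 - 2d_{iv}^2 + d_{iw}^2$ and verify the identity $a^2 - 2(a+1)^2 + (a+2)^2 = 2$. The paper is terser, simply asserting that the three relevant entries in each row are $j^2,(j+1)^2,(j+2)^2$ for some (possibly negative) $j$, whereas you spell out the cut-vertex argument that forces $d_{iv}=d_{iu}+1$ and $d_{iw}=d_{iu}+2$ for $i\in C_u$; this extra care is warranted and strengthens the exposition without changing the underlying idea.
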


\begin{proof}
Let $G$ be a graph with distance squared matrix $\Delta$. Suppose that $v$ is vertex of degree 2 with neighbors $u$ and $w$. Let $\mathbf{x}$ be a vector in $\RR^n$ defined as above. 
Suppose also that $G$ can be disconnected by removing $(u, v)$ or $(v, w)$. Each row of $\Delta$ will take the form $\begin{bmatrix}\cdots & j^2 & (j + 1)^2 & (j + 2)^2 & \cdots\end{bmatrix}$ where $j$ depends on the location of the vertex in the graph with which the row corresponds and the positions of $j^2$, $(j + 1)^2$, and $(j + 2)^2$ correspond with the positions of $u$, $v$, and $w$ in G respectively (note that $j$ itself could be negative). The vector $\mathbf{x}$ will take the form $\begin{bmatrix}\mathbf{0} & 1 & -2 & 1 & \mathbf{0}\end{bmatrix}^\top$ where the $1$, $-2$, and $1$ appear in the same position as $j^2$, $(j + 1)^2$, and $(j + 2)^2$, respectively. Multiplying an arbitrary row of $\Delta$ with $\mathbf{x}$ gives
\begin{align*}
    \begin{bmatrix}\cdots & j^2 & (j + 1)^2 & (j + 2)^2 & \cdots\end{bmatrix}
    \begin{bmatrix}\mathbf{0} \\ 1 \\ -2 \\ 1 \\ \mathbf{0}\end{bmatrix}
    &= j^2 - 2(j + 1)^2 + (j + 2)^2 \\
    &= j^2 - 2j^2 - 4j - 2 + j^2 + 4j + 4 \\
    &= 2
.\end{align*}
\end{proof}

\begin{lemma}\label{lem:tree_lin_ind}
 Define a vector $\mathbf{x}_i$ for each vertex of degree $2$ in the same fashion as the lemma above. Then $\{\mathbf{x}_i\}$ is a linearly independent set.
\end{lemma}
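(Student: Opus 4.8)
The plan is to reduce the independence of the full vectors to the nonsingularity of a small square matrix obtained by discarding every coordinate except those indexed by the degree-2 vertices themselves. Suppose there are $t$ such vertices $v_1,\dots,v_t$, and suppose $\sum_i c_i\mathbf{x}_i=\mathbf 0$. I would restrict attention to the $t$ coordinates indexed by $v_1,\dots,v_t$ and let $M$ be the $t\times t$ matrix whose $i$th column is the restriction of $\mathbf{x}_i$ to these coordinates. I claim $M=B-2I$, where $I$ is the $t\times t$ identity and $B$ is the adjacency matrix of the subgraph $H$ of $G$ induced on $\{v_1,\dots,v_t\}$: the entry $-2$ of $\mathbf{x}_i$ sits in coordinate $v_i$ (giving the diagonal $-2I$), while $\mathbf{x}_i$ contributes a $+1$ in coordinate $v_j$ exactly when $v_j$ is one of the two neighbors $u_i,w_i$ of $v_i$, that is, exactly when $v_i$ and $v_j$ are adjacent. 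If $M$ is nonsingular then the restricted vectors are already independent, so a fortiori the $\mathbf{x}_i$ are independent; hence it suffices to prove $M$ is nonsingular.

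To show $B-2I$ is nonsingular I would analyze the induced subgraph $H$. Since each $v_i$ has degree exactly $2$ in $G$, it has at most two neighbors inside $H$, so $H$ has maximum degree at most $2$, and each connected component of $H$ is a path or a cycle. A cycle component would consist of degree-2 vertices all of whose neighbors already lie in the component, making it a connected component of $G$; as $G$ is connected this would force $G$ itself to be that cycle, which is precisely the situation excluded by the standing hypothesis that the $v_i$ are degree-2 vertices whose removal disconnects $G$ (as in Lemma \ref{lem:tree_null}). Ruling this out, $H$ is a disjoint union of paths. The adjacency matrix of a path $P_m$ has all eigenvalues strictly between $-2$ and $2$, so the spectral radius $\rho(B)<2$, and therefore every eigenvalue of $B-2I$ is strictly negative. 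Thus $B-2I$ is negative definite, in particular nonsingular, which completes the argument.

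The one genuine obstacle is that distinct degree-2 vertices may be \emph{adjacent}: were they pairwise non-adjacent we would have $B=0$ and $M=-2I$, trivially nonsingular, so the entire content lies in controlling the paths formed by adjacent degree-2 vertices. Rather than quoting the path spectrum, one can dispatch this step by hand: along a single path $v_{j_1}-\cdots-v_{j_m}$ of degree-2 vertices, the coordinate equation at the endpoint $v_{j_1}$ reads $-2c_{j_1}+c_{j_2}=0$ and the equations at interior vertices give the recurrence $c_{j_{k+1}}=2c_{j_k}-c_{j_{k-1}}$, whose solution is $c_{j_k}=k\,c_{j_1}$; the equation at the far endpoint $v_{j_m}$ then forces $(m+1)c_{j_1}=0$, so $c_{j_1}=0$ and every coefficient on that path vanishes, and distinct paths never interact because any edge between two of the $v_i$ keeps both endpoints in the same component of $H$. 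I would lead with the negative-definiteness argument for brevity and keep this recurrence as a self-contained alternative, while flagging the cycle $C_n$ as the genuine boundary case where the statement fails, since there $\sum_i\mathbf{x}_i=\mathbf 0$.
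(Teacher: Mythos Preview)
Your argument is correct. The paper states this lemma without proof, evidently regarding it as routine, so there is no approach to compare against; what you have written supplies the missing justification. Your reduction to the $t\times t$ matrix $M=B-2I$ on the coordinates indexed by the degree-$2$ vertices is clean, and your observation that the induced subgraph $H$ has maximum degree $2$ with cycle components ruled out by the disconnection hypothesis is exactly the point that needs to be made. Either the spectral-radius argument ($\rho(B)<2$ for a disjoint union of paths) or the explicit recurrence along each path suffices to finish; both are valid and the recurrence version has the virtue of being fully self-contained. Your remark that a cycle $G=C_n$ is the genuine boundary case, where the conclusion would fail, is a helpful sanity check on why the disconnection hypothesis is not decorative.
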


\begin{lemma}\label{lem:deg_2_verts} 
If $G$ has $t \geq 2$ vertices of degree $2$ whose removal disconnects $G$, then $i_0 (\Delta) \geq t-1$.
\end{lemma}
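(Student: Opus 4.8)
The plan is to combine the two preceding lemmas directly. By Lemma \ref{lem:tree_null}, for each of the $t$ degree-$2$ cut vertices we obtain a vector $\mathbf{x}_i$ (defined as in that lemma) satisfying $\Delta \mathbf{x}_i = 2\cdot\one$. The key observation is that although no single $\mathbf{x}_i$ lies in the kernel of $\Delta$, their pairwise differences do: for any $i$ and $j$,
\[
\Delta(\mathbf{x}_i - \mathbf{x}_j) = 2\cdot\one - 2\cdot\one = \mathbf{0},
\]
so each difference $\mathbf{x}_i - \mathbf{x}_j$ is a null vector of $\Delta$.

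First I would fix one vertex, say the first, and form the $t-1$ vectors $\mathbf{y}_i = \mathbf{x}_i - \mathbf{x}_1$ for $i = 2, \ldots, t$. By the computation above, each $\mathbf{y}_i$ lies in the kernel of $\Delta$. It then remains to verify that these $t-1$ vectors are linearly independent, and this is where Lemma \ref{lem:tree_lin_ind} does the work. If $\sum_{i=2}^t c_i \mathbf{y}_i = \mathbf{0}$, then $\sum_{i=2}^t c_i \mathbf{x}_i - \left(\sum_{i=2}^t c_i\right)\mathbf{x}_1 = \mathbf{0}$, and since $\{\mathbf{x}_1, \ldots, \mathbf{x}_t\}$ is linearly independent, every coefficient must vanish, forcing $c_i = 0$ for all $i$. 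Hence the kernel of $\Delta$ contains $t-1$ linearly independent vectors, so $i_0(\Delta) = \dim\ker(\Delta) \geq t-1$.

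I do not anticipate a serious obstacle here, since the genuine content has already been established in the linear independence of the $\{\mathbf{x}_i\}$ (Lemma \ref{lem:tree_lin_ind}); this statement is essentially a corollary. The only point requiring care is that passing from the independent family $\{\mathbf{x}_i\}$ to the family of differences $\{\mathbf{x}_i - \mathbf{x}_1\}$ preserves independence while dropping the count by exactly one, which is the short linear-algebra check carried out above. One subtlety worth flagging is that the lemma gives only a lower bound $i_0(\Delta)\geq t-1$ rather than equality, reflecting that the span of the $\mathbf{x}_i$ need not capture all of $\ker(\Delta)$; establishing the matching upper bound is a separate matter handled elsewhere in the paper.
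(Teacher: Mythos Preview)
Your proof is correct and follows essentially the same approach as the paper: form the differences $\mathbf{x}_i - \mathbf{x}_1$, observe they lie in $\ker\Delta$ since each $\Delta\mathbf{x}_i = 2\cdot\one$, and invoke the linear independence of $\{\mathbf{x}_i\}$ from Lemma~\ref{lem:tree_lin_ind} to conclude the $t-1$ differences are independent. Your write-up is slightly more detailed in spelling out the linear-independence check for the differences, but the argument is the same.
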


\begin{proof}
Let $\{\mathbf{x}_1, ... , \mathbf{x}_t\}$ be as in Lemma \ref{lem:tree_lin_ind}. Note that $\{\mathbf{x}_1 - \mathbf{x}_2, \mathbf{x}_1 - \mathbf{x}_3, ... , \mathbf{x}_1 - \mathbf{x}_t\}$ is a linearly independent set from the lemma above. Also $\Delta(\mathbf{x}_1 - \mathbf{x}_2) = \Delta \mathbf{x}_1 - \Delta \mathbf{x}_2 = 0.$  Thus each vector is in the null space of $\Delta$ and so $i_0(\Delta) \geq t - 1$. 
\end{proof}

\section{Trees}\label{sec:tree}

In this section we will apply the results of the previous section to determine the inertia of distance  squared matrix of a tree. We let $T$ be a tree and $\Delta$ be its distance squared matrix. 

The following is the main result of this section.

\bigskip
\begin{theorem}\label{thm:tree_inertia} Let $T$ be a tree of $n > 2$ vertices and $\Delta$ be the distance squared matrix of $T$. Then 

\[ i_0(\Delta) = \begin{cases}
    0 & \text{ if } t = 0,1\\
    t - 1 & \text{ if } t \geq 2,
\end{cases}
\]
where $t$ is the number of degree $2$ vertices of $T$ and 
\[i_-(\Delta)=\ell,\]
where $\ell$ is the number of leaves (vertices of degree one) of $T$.
\end{theorem}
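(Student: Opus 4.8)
The plan is to prove both formulas simultaneously by induction on $n$, as the single statement ``$i_-(\Delta)=\ell$ and $i_0(\Delta)=\max(t-1,0)$''; the value of $i_+(\Delta)=n-\ell-\max(t-1,0)$ then follows automatically. One inequality is immediate: in a tree every non-leaf vertex is a cut vertex, so Lemma \ref{lem:deg_2_verts} applies to each of the $t$ degree-$2$ vertices and yields $i_0(\Delta)\ge t-1$ when $t\ge 2$. The real content is therefore the reverse bound on $i_0$ together with the exact count $i_-=\ell$. I take $n=3$ and the stars $K_{1,k}$ as base cases (for a star, Proposition \ref{lem:kpend} with $G'$ a single vertex reduces $i(\Delta)$ to the inertia of an explicit $2\times 2$ matrix, giving $(1,k,0)$); the matrix of $P_2$ has $i_-=1\ne 2=\ell$, which is precisely why $n>2$ is assumed and why I must keep every reduced tree on at least three vertices.

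For the inductive step, fix a longest path and let $v_0$ be a leaf endpoint with neighbor $v_1$; maximality forces every neighbor of $v_1$ other than its path-successor $v_2$ to be a leaf, which splits the argument in two. If $v_1$ carries $k\ge 2$ pendant leaves (Case A), I apply Proposition \ref{lem:kpend}: the congruence built in its proof gives $i(\Delta)=(0,k-1,0)+i(M)$ with $M=\Delta_{T''}+(4-\tfrac{4}{k})\mathbf e_k\mathbf e_k^\top$, where $T''$ keeps a single pendant at $v_1$. Deleting $k-1$ leaves turns $v_1$ into a degree-$2$ vertex, so $T''$ has $\ell-(k-1)$ leaves and $t+1$ degree-$2$ vertices, and the induction hypothesis determines $i(\Delta_{T''})$. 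If instead $v_1$ has degree $2$ (Case B, a pendant path), I delete the single leaf $v_0$; the resulting tree $\tilde T$ has the same number of leaves and one fewer degree-$2$ vertex.

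Case B closes for $t\ge 2$ by counting: $\Delta_{\tilde T}$ is a principal submatrix of $\Delta$, so Cauchy interlacing gives $i_\pm(\Delta)\ge i_\pm(\Delta_{\tilde T})$, and combining these with $i_0(\Delta)\ge t-1$ produces a sum of lower bounds equal to $n$, forcing each to be met and hence $i_-(\Delta)=\ell$, $i_0(\Delta)=t-1$. In Case A with $t\ge 1$, the update $(4-\tfrac{4}{k})\mathbf e_k\mathbf e_k^\top$ is positive semidefinite of rank one, so it preserves $i_-(M)=i_-(\Delta_{T''})$ and lowers $i_0$ by at most one, the drop occurring exactly when $\mathbf e_k\not\perp\ker\Delta_{T''}$. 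Lemmas \ref{lem:tree_null} and \ref{lem:tree_lin_ind} supply this: the null vector attached to the new degree-$2$ vertex $v_1$ has coordinate $+1$ at the leaf $v_0$, while no other degree-$2$ null vector is supported there, so a suitable difference of two such vectors has nonzero $v_0$-coordinate. This realizes exactly the predicted drop $i_0(\Delta)=i_0(\Delta_{T''})-1=t-1$.

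The remaining, genuinely nondegenerate situations are where I expect the real difficulty: the pendant-path case with $t=1$ and the degree-$2$-free case $t=0$, in which no null vectors are available and the lower bounds give nothing. Both reduce to a single scalar positivity statement. For $t=1$ in Case B, $\Delta_{\tilde T}$ is nonsingular by induction, so Haynsworth's formula (Theorem \ref{thm:haynsworth_additivity_formula}) gives $i(\Delta)=i(\Delta_{\tilde T})+i(-\mathbf r^\top\Delta_{\tilde T}^{-1}\mathbf r)$, where $\mathbf r$ lists the squared distances from $v_0$, and the theorem holds precisely when this scalar is positive, i.e.\ $\mathbf r^\top\Delta_{\tilde T}^{-1}\mathbf r<0$. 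For $t=0$ in Case A, the matrix determinant lemma shows $i(M)=i(\Delta_{T''})$ exactly when $1+(4-\tfrac{4}{k})\,\mathbf e_k^\top\Delta_{T''}^{-1}\mathbf e_k>0$. Each is the sign of a squared-distance quadratic form evaluated against an inverse distance-squared matrix, and I expect to settle both by exploiting the second-difference identity $j^2-2(j+1)^2+(j+2)^2=2$ underlying Lemma \ref{lem:tree_null}, which makes the relevant sums telescope along the pendant path.
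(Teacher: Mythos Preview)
Your overall strategy is sound, and the arguments for Case~A with $t\ge 1$ and Case~B with $t\ge 2$ are correct (modulo one imprecision: a positive semidefinite rank-one update does not automatically preserve $i_-$; it is the condition $\mathbf e_k\not\perp\ker\Delta_{T''}$ that forces both $i_0$ to drop by exactly one and $i_-$ to stay fixed, and you do establish that condition). The two ``nondegenerate'' cases at the end, however, are where the plan is incomplete, and your proposed attack via the second-difference identity does not reach the target.

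First, Case~B with $t=1$ is vacuous once you use that a longest path has two ends. If the chosen end gives $\deg(v_1)=2$, look at the other end: maximality forces its neighbor $v_1'$ to have only leaf neighbors apart from its path-successor, so either $\deg(v_1')\ge 3$ (switch ends and run Case~A) or $\deg(v_1')=2$, in which case $v_1\ne v_1'$ are two distinct degree-$2$ vertices and $t\ge 2$. So the Haynsworth scalar $\mathbf r^\top\Delta_{\tilde T}^{-1}\mathbf r$ never needs to be computed. The paper's proof makes exactly this move.

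Case~A with $t=0$ is not vacuous (take a double star), and here the gap is real. The identity $j^2-2(j{+}1)^2+(j{+}2)^2=2$ says $\Delta_{T''}\mathbf x=2\cdot\one$ for the degree-$2$ vector $\mathbf x$ at $v_1$, hence it gives information about $\Delta_{T''}^{-1}\one$; it says nothing about the diagonal entry $\mathbf e_k^\top\Delta_{T''}^{-1}\mathbf e_k$ that your matrix-determinant-lemma condition requires. The paper sidesteps this entirely by a second use of the induction hypothesis: delete the remaining pendant from $M$, obtaining as a principal submatrix $\Delta_{k+1}$, the distance-squared matrix of $T$ with \emph{all} $k$ pendants removed. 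In that tree $v_1$ has become a leaf, so it still has $\ell-(k-1)$ leaves and at least three vertices; induction gives $i_-(\Delta_{k+1})=\ell-(k-1)$, and Cauchy interlacing then yields $i_-(M)\ge\ell-(k-1)$. Combined with your subadditivity bound $i_-(M)\le i_-(\Delta_{T''})=\ell-(k-1)$, this pins down $i_-(M)$ with no sign computation and uniformly in $t$. Once $i_-(M)=i_-(\Delta_{T''})$ is known, the general inequality $i_+(M)\ge i_+(\Delta_{T''})$ for a positive semidefinite update forces $i_0(M)\le i_0(\Delta_{T''})$, and the count closes.
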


\begin{proof}
We proceed by induction on the number of vertices $n$. For $n = 3$, $T\cong P_3$, and 
\[\Delta = \begin{bmatrix}
    0 & 1 & 4\\
    1 & 0 & 1\\
    4 & 1 & 0
\end{bmatrix},\]
with eigenvalues of $2 + \sqrt{6}, -4, 2 - \sqrt{6}$, satisfying the conclusion of the theorem.

Now, let $T$ be a tree of $n > 3$ vertices and assume the conclusion is true for all trees of fewer than $n$ vertices (but at least 3 vertices). If diam$(T) = 2$, then $T$ is a star and 
\[\Delta = \begin{bmatrix}
    0 & \textbf{1}^T\\
    \textbf{1} & 4J - 4I
\end{bmatrix},\]
where $4J - 4I$ is $(n - 1) \times (n - 1)$, and $J$ is the all ones matrix. We have
\[i_-(4J - 4I) = n - 2 \text{ (where the eigenvalue equals $-4$ with multiplicity of $n - 2$)},\]
\[i_+(4J - 4I) = 1 \text{ (where the eigenvalue equals $4(n - 2)$)}.\]

Performing similar row and column operations we subtract from the first row $1 / (4(n - 2))$ times each of the remaining rows of $\Delta$ and from the first column $1 / (4(n - 2))$ times each of the remaining columns of $\Delta$. Thus, $\Delta$ is congruent to
\[\begin{bmatrix}
    -\cfrac{n-1}{4(n-2)} & \textbf{0}^T\\
    \\
    \textbf{0} & 4J - 4I
\end{bmatrix}.
\]
Therefore, by Sylvester's law of inertia, $i_-(\Delta) = n - 1$ and $i_0 (\Delta) = 0$. Hence, from here on we assume diam$(T) \geq 3$.

Consider a diametrical path in $T$. There exists a vertex $w$ such that $w$ has exactly one non-leaf neighbor $u$.

\begin{center}
\begin{tikzpicture}
\draw (-{3/2},0)node{$\bullet$}--(-{5/4},0) (-{3/4},0)node{. . .} (-{1/4},0)--(0,0) (-{3/4},1)node{$\bullet$}--(0,0)node{$\bullet$}--(1,0)node{$\bullet$} (1,-{1/3})node{$w$} (0,-{1/3})node{$u$} (1,0)--(2,1)node{$\bullet$} (1,0)--(2,{3/7})node{$\bullet$} (1,0)--(2,-{3/7})node{$\bullet$} (1,0)--(2,-1)node{$\bullet$} (2,0)node{.} (2,{1/6})node{.} (2,-{1/6})node{.} (3,0)node{\Huge \}} (4,0)node{$k$ leaves};
\end{tikzpicture}
\end{center}
Let $w$ be adjacent to $k$ leaves, and label leaves $1,...,k$. Now let $w = k+1$ and $u = k+2$.

\begin{description}

\item[Case 1:] $k \geq 2$. By Proposition \ref{lem:kpend} we have that $i_-(\Delta) = k-1 + i_-\left(\Delta_{k} + \left(4-\frac{4}{k}\right)\mathbf{e}_k\mathbf{e}_k^T\right)$.  Let $F = (4 - 4/k) \mathbf{e}_k \mathbf{e}_k^T + \Delta_k$ and notice $F(1)=\Delta_{k+1}$ where $\Delta_{k+1}$ is the distance squared matrix for $T \backslash \{1,...,k\}$. Notice that $T \backslash \{1,...,k\}$ is a tree with $\ell - (k - 1)$ leaves and at least 3 vertices. Using our inductive hypothesis on $T \backslash \{1,...,k\}$ to find $i_-(\Delta_k)$, we see that
\begin{equation*}
i_-(\Delta_{k + 1}) = \ell - (k - 1).
\end{equation*}
It follows from the Cauchy interlacing theorem that
\[i_-(F) \geq \ell - (k - 1).\]
Since $F = (4 - 4/k) \boldsymbol{e}_k \boldsymbol{e}_k^T + \Delta_k$ and $4 - 4/k > 0$, the subadditivity of inertia gives us 
\begin{align*}
    i_-(F) &\leq i_-((4-4/k) \boldsymbol{e}_k \boldsymbol{e}_k^T) + i_-(\Delta_k) \\
    &= 0 + \ell - (k - 1) \\
    &= \ell - (k - 1),
\end{align*}
and we conclude that 
$i_-(F) = \ell - (k - 1).$
Thus,
\begin{align*}
    i_-(\Delta) &=k-1 + i_-(F) \\
    &= k - 1 + \ell - (k - 1) \\
    &= \ell.
\end{align*}

\item[Case 2:]$k = 1$. Label the leaf adjacent to $w$ as $v$. Let $\Delta_2 = \Delta(1)$. By the inductive hypothesis, $i_-(\Delta_2) = \ell$. Thus, by interlacing, $i_-(\Delta) \geq \ell$. As there are two ends in a diametrical path, we may assume that there exists a vertex $y$ such that $y$ is adjacent to exactly one leaf $x$ and exactly one non-leaf $z$. If this were not the case we would be able to apply the argument from Case 1.

\begin{center}
\begin{tikzpicture}

\draw (0,0)node{$\bullet$}--(3/4,0)node{$\bullet$}--(6/4,0)node{$\bullet$}--(7/4,0) (9/4,0)node{. . .} (11/4,0)--(12/4,0)node{$\bullet$}--(15/4,0)node{$\bullet$}--(18/4,0)node{$\bullet$} (0,-1/3)node{$x$} (3/4,-1/3)node{$y$} (6/4,-1/3)node{$z$} (12/4,-1/3)node{$u$} (15/4,-1/3)node{$w$} (18/4,-1/3)node{$v$};
\end{tikzpicture}
\end{center}

It is possible that $u = y$ and $z = w$ in the case that $T \cong P_4$ or that $u = z$ in the case that diam($T$) $= 4$. In any case, using Lemma \ref{lem:deg_2_verts}, the columns of $\Delta$ corresponding to $x,y,z$ with weights $1, -2, 1$ form the all 2's vector. Thus the column corresponding to $v$ is a linear combination of the columns corresponding to $x,y,z,u,w$. Thus rank$(\Delta)$ = rank$(\Delta_2)$ and $i_-(\Delta) \leq \ell$.

\end{description}

Now we continue to show that $i_0(\Delta) = t - 1$, where $t \geq 2$ is the number of vertices of degree 2, or, in the case that $t = 0, 1$, that $\Delta$ is invertible. Recall from the beginning of the proof that $n \geq 4$ and diam($T$) $\geq 3$.

\begin{description}

\item[Case 1:] $T$ has a vertex which is adjacent to more than two leaves. Then deleting a leaf from $T$ yields $\Delta_2$, which is a principal submatrix of $\Delta$. Note that the number of degree 2 vertices has not changed. By interlacing, $i_+(\Delta) \geq i_+(\Delta_2)$, and by the first part of the theorem $i_-(\Delta) = i_-(\Delta_2) + 1$. If $t = 0$ or $1$ then, by our inductive hypothesis, $\Delta_2$ is invertible. Thus $i_0(\Delta_2) = 0$. So
\[i_+(\Delta_2) + i_-(\Delta_2) = n - 1\]
and
\[n \geq i_+(\Delta) + i_-(\Delta) \geq i_+(\Delta_2) + i_-(\Delta_2) + 1 = n.\]
Thus $i_0(\Delta) = 0$, as desired.

If $t \geq 2$, then by Lemma \ref{lem:deg_2_verts} we have $i_0(\Delta) \geq t - 1$ and by the inductive hypothesis $i_0(\Delta_2) = t - 1$. So
\[n - 1 = i_+(\Delta_2) + i_-(\Delta_2) + i_0(\Delta_2)\]
and
\[n \geq i_+(\Delta) + i_-(\Delta) + i_0(\Delta) \geq i_+(\Delta_2) + i_-(\Delta_2) + 1 +i_0(\Delta_2) = n.\]
Thus $i_0(\Delta) = i_0(\Delta_2) = t - 1$, as desired. 

\item[Case 2:] Every vertex of $T$ is adjacent to at most one leaf. Since diam$(T) \geq 3$, there exists at least two vertices of degree 2, each adjacent to a leaf (consider a diametrical path). Deleting such a pendant vertex decreases the number of vertices of degree 2 by one and keeps the number of leaves the same. Note that we have rank$(\Delta) = $ rank$(\Delta_2)$ by arguments similar to the above when we proved $i_-(\Delta)=\ell$ in the $k=1$ case above. By interlacing, $i_+(\Delta) \geq i_+(\Delta_2)$ and $i_-(\Delta) \geq i_-(\Delta_2)$. Since $i_+(\Delta) + i_-(\Delta) = \text{rank}(\Delta) = \text{rank}(\Delta_2) = i_+(\Delta_2) + i_-(\Delta_2)$, we have $i_+(\Delta) = i_+(\Delta_2)$ and $i_-(\Delta) = i_-(\Delta_2)$. Thus $i_0(\Delta) = i_0(\Delta_2) + 1 = t - 2 + 1 = t - 1$ and the proof is complete.

\item[Case 3:] $T$ has a vertex which is adjacent to exactly two leaves. We consider the vertex $w$ with 2 adjacent leaves. Then the distance squared matrix has the form 

\begin{equation*}
    \Delta = \begin{bmatrix}
        0 & 4 & \textbf{x}^T \\
        4 & 0 & \textbf{x}^T \\
        \textbf{x} & \textbf{x} & \Delta''
    \end{bmatrix},
\end{equation*}
where $\Delta''$ is $(n - 2) \times (n - 2)$ and $\textbf{x}$ is a vector of appropriate size. Notice that if we were to remove both leaves, our distance squared matrix would just be $\Delta''$. From above, we know that adding one leaf on to the end of the branch would result in a matrix, call it $\Delta'$, of the form

\begin{equation*}
    \Delta' = \begin{bmatrix}
        0 & \mathbf{x}^T \\
        \mathbf{x} & \Delta'' \\
    \end{bmatrix}
\end{equation*}
that has one more zero eigenvalue than $\Delta''$. Thus the vector $\mathbf{x}$ is in the column space of $\Delta''$. So the vector $\textbf{x}$ can be reduced to the all zeros vector without changing the column space of $\Delta$. Hence rank$\Delta'' =$ rank$\Delta - 2$, since the upper left $2 \times 2$ matrix has a rank of 2. Therefore, adding the two leaves onto our vertex $w$ has no effect on the number of zero eigenvalues of $\Delta$.

\end{description}

\end{proof}

\section{Cycles}\label{sec:cycle}

To continue our journey to understanding unicyclic graphs, we will now consider cycle graphs.

\begin{definition}
Let $C$ be a graph containing three or more nodes. We call $C$ a cycle graph if every node has exactly two edges connected to it.
\end{definition}

We will obtain formulas for the eigenvalues of the distance squared matrix of cycle graphs.  We will treat even and odd length cycles separately. The basic distance squared matrices of these graphs take the form
\begin{equation*}
    \begin{bmatrix}
    c_1 & c_2 & \cdots & c_{n - 1} & c_n \\
    c_n & c_1 & \cdots & c_{n - 2} & c_{n - 1} \\
    \vdots & \vdots &  & \vdots & \vdots \\
    c_2 & c_3 & \cdots & c_n & c_1
    \end{bmatrix}
\end{equation*}
where $c_1 = 0$ and $c_i = c_{n + 2 - i} = (i - 1)^2$ for $i > 1$. This is an example of a circulant matrix. The eigenvalues will be in the form of $c_1 + c_2\omega_j + \cdots + c_n\omega_j^{n - 1}$. 
\begin{theorem} \label{thm:cycle_eigenvalues}
Let $C$ be a cycle graph of length $n$ and $\lambda_j$ be the $j$th eigenvalue of $C$. When $n$ is odd, we have
\begin{equation} \label{eq:odd_cycle_eigenvalues}
\begin{split}
    \lambda_j = 2\cos(\pi j) \left(\cos\left(\frac{(n-2)\pi}{n}j\right)\right. + 4 \cos \left(\frac{(n-4)\pi}{n}j\right)\\ 
    + \cdots +  \frac{(n - 1)^2}{4}\cos\left(\frac{\pi}{n}j\right)\left.\vphantom{\frac12}\right).
\end{split}
\end{equation}
When $n$ is even, we have
\begin{equation} \label{eq:even_cycle_eigenvalues}
\begin{split}
    \lambda_j = 
    2\cos(\pi j)\left(\frac{n^2}{8} \right.+ \cos\left(\frac{(n - 2)\pi}{n}j\right)
    + 4\cos\left(\frac{(n - 4)\pi}{n}j\right)\\
    + \cdots 
    + \frac{(n - 1)^2}{4}\cos\left(\frac{\pi}{n}j\right)\left.\vphantom{\frac12}\right).
\end{split}
\end{equation}
\end{theorem}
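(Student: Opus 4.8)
The plan is to apply Lemma \ref{lem:eigenstuff_of_circulant_matrices} directly, since the distance squared matrix of $C$ is precisely the circulant matrix with first row $(c_1,\ldots,c_n)$, and then to simplify the resulting complex exponential sum into the stated real cosine expressions. By the lemma, the $j$th eigenvalue is
\begin{equation*}
\lambda_j = \sum_{k=1}^n c_k\, \omega^{(k-1)j}, \qquad \omega = \exp\left(\frac{2\pi i}{n}\right).
\end{equation*}
Since $c_1 = 0$ the $k=1$ term vanishes, and the entire computation amounts to exploiting the symmetry $c_k = c_{n+2-k}$ to collapse conjugate pairs of exponentials into cosines.

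The key step is the pairing. The term indexed by $k$ pairs with the term indexed by $n+2-k$; using $\omega^n = 1$ one has $\omega^{(n+1-k)j} = \omega^{-(k-1)j}$, so the two terms combine as
\begin{equation*}
c_k\left(\omega^{(k-1)j} + \omega^{-(k-1)j}\right) = 2 c_k \cos\left(\frac{2\pi (k-1) j}{n}\right).
\end{equation*}
Writing $c_k = (k-1)^2$ for the relevant values of $k$ and reindexing by $m = k-1$, the nonzero entries $1^2, 2^2, \ldots$ each occur twice and pair up in exactly this way. This is where the parity of $n$ enters: when $n$ is odd every nonzero $c_k$ has a distinct partner, whereas when $n$ is even there is a single unpaired antipodal entry $c_{n/2+1} = (n/2)^2$. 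For this entry $\omega^{(n/2)j} = e^{\pi i j} = (-1)^j$ is real, so it contributes $(n/2)^2(-1)^j = \tfrac{n^2}{4}\cos(\pi j)$, which is the source of the extra $\tfrac{n^2}{8}$ term (after the global factor of $2$ is extracted).

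Finally, to match the stated form I would factor $\cos(\pi j)$ out of every cosine. For integer $j$, since $\sin(\pi j) = 0$, the addition formula gives $\cos\theta = \cos(\pi j)\cos(\theta - \pi j)$; applying this with $\theta = \tfrac{2\pi m j}{n}$ converts $\cos\left(\tfrac{2\pi m j}{n}\right)$ into $\cos(\pi j)\cos\left(\tfrac{(n-2m)\pi}{n}j\right)$, since cosine is even. Reindexing then yields the displayed sums, with $m=1$ producing the $\cos\left(\tfrac{(n-2)\pi}{n}j\right)$ term and the largest admissible $m$ producing the final term of each formula. I expect the only genuine obstacle to be careful bookkeeping: getting the index ranges of the pairing exactly right, correctly isolating the antipodal term in the even case, and confirming that the extracted factor $\cos(\pi j)$ appears uniformly across all terms so that it factors out cleanly.
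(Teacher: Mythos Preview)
Your proposal is correct and is essentially the paper's argument: both start from the circulant eigenvalue formula, exploit the symmetry $c_k=c_{n+2-k}$ to pair terms, isolate the antipodal term in the even case, and extract the common factor $\cos(\pi j)$. The only cosmetic difference is that the paper merges the pairing and the extraction into a single application of the sum-to-product identity $\cos A+\cos B=2\cos\tfrac{A+B}{2}\cos\tfrac{A-B}{2}$, whereas you first collapse conjugate exponentials to $2c_k\cos\bigl(\tfrac{2\pi(k-1)j}{n}\bigr)$ and then factor via $\cos\theta=\cos(\pi j)\cos(\theta-\pi j)$ using $\sin(\pi j)=0$.
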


\begin{proof}
We consider the case where $n$ is odd. When we substitute our $c_i$'s into the eigenvalue equation, we will get 
\begin{align*}
    \lambda_j &= \omega_j + 4\omega_j^2 + \cdots + 4\omega_j^{n - 2} + \omega_j^{n - 1} \\
    &= \cos\left(\frac{2\pi}{n}j\right) + 4\cos\left(\frac{4\pi}{n}j\right) + \cdots + 4\cos\left(\frac{(2n - 4)\pi}{n}j\right) + \cos\left(\frac{(2n - 2)\pi}{n}j\right) \\
    \begin{split}
    &= 2\cos\left(\left(\frac{2\pi}{n}j + \frac{(2n - 2)\pi}{n}j\right)\frac{1}{2}\right)\cos\left(\left(\frac{2\pi}{n}j - \frac{(2n - 2)\pi}{n}j\right)\frac{1}{2}\right) \\
    &\qquad + 8\cos\left(\left(\frac{4\pi}{n}j + \frac{(2n - 4)\pi}{n}j\right)\frac{1}{2}\right)\cos\left(\left(\frac{4\pi}{n}j-\frac{(2n - 4)\pi}{n}j\right)\frac{1}{2}\right) + \cdots\\
    &\qquad + \frac{(n - 1)^2}{2}\cos\left(\left(\frac{(n - 1)\pi}{n}j + \frac{(n + 1)\pi}{n}j\right)\frac{1}{2}\right)\cos\left(\left(\frac{(n - 1)\pi}{n}j - \frac{(n + 1)\pi}{n}j\right)\frac{1}{2}\right) \\
    \end{split}\\
    &= 2\cos(\pi j)\cos\left(\frac{(n - 2)\pi}{n}j\right) + 8\cos(\pi j)\cos\left(\frac{(n - 4)\pi}{n}j\right) + \cdots \\
    &\qquad + \frac{(n - 1)^2}{2}\cos(\pi j)\cos\left(\frac{\pi}{n}j\right)\\
    &= 2\cos(\pi j)\left(\cos\left(\frac{(n - 2)\pi}{n}j\right)
    + 4\cos\left(\frac{(n - 4)\pi}{n}j\right)
    + \cdots 
    + \frac{(n - 1)^2}{4}\cos\left(\frac{\pi}{n}j\right) \right).
\end{align*}

Now we will consider the case of an even cycle. In this case, the above derivation will be very similar except there will be a $n^2/8$ in the large quantity of equation \ref{eq:odd_cycle_eigenvalues}:
\begin{equation*}
    \lambda_j = 
    2\cos(\pi j)\left(\frac{n^2}{8} + \cos\left(\frac{(n - 2)\pi}{n}j\right)
    + 4\cos\left(\frac{(n - 4)\pi}{n}j\right)
    + \cdots 
    + \frac{(n - 1)^2}{4}\cos\left(\frac{\pi}{n}j\right) \right).
\end{equation*}

\end{proof}

\begin{corollary}\label{cor:cycle_eigenvalues}
Let $C$ be a cycle graph of length $n > 2$ and $\Delta$ be its distance squared matrix. If $n \equiv 1 \pmod 4$, then $i_+(\Delta) = \frac{n - 1}{2} + 1$ and $i_-(\Delta) = \frac{n - 1}{2}$. If $n \equiv 3 \pmod 4$, then $i_+(\Delta) = \frac{n - 1}{2}$ and $i_-(\Delta) = \frac{n - 1}{2} + 1$. If $n$ is even, then $i_+(\Delta) = \frac n 2$ and $i_-(\Delta) = \frac n 2$.
\end{corollary}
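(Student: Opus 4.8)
The plan is to pin down the sign of each eigenvalue from Theorem \ref{thm:cycle_eigenvalues} and then count by the parity of the index. Writing $\cos(\pi j)=(-1)^j$, each eigenvalue factors as $\lambda_j = 2(-1)^j S_j$, where $S_j$ is the bracketed trigonometric sum; so the sign of $\lambda_j$ is $(-1)^j\operatorname{sign}(S_j)$, and the entire problem reduces to determining $\operatorname{sign}(S_j)$. Rather than estimate $S_j$ term by term, I would first obtain a closed form for $\lambda_j$, and the cleanest route to it reuses the identity already behind Lemma \ref{lem:tree_null}: the second difference of the squared-distance sequence $c_k=\min(k,n-k)^2$ satisfies $(k-1)^2-2k^2+(k+1)^2=2$ wherever the sequence is genuinely quadratic, and departs from $2$ only at the vertex antipodal to the base vertex. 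Concretely, setting $g_k=c_{k+1}-2c_k+c_{k-1}$ (indices mod $n$), one computes $g_k=2$ for all $k$ except $g_{n/2}=2-2n$ when $n$ is even, or $g_{(n-1)/2}=g_{(n+1)/2}=2-n$ when $n$ is odd.

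Since $\Delta$ is circulant with eigenvalues $\lambda_j=\sum_k c_k\omega^{jk}$ (Lemma \ref{lem:eigenstuff_of_circulant_matrices}, with $\omega=\exp(2\pi i/n)$) and the second difference acts in the Fourier domain by multiplication by $\omega^{-j}-2+\omega^{j}=-4\sin^2(\pi j/n)$, I get $-4\sin^2(\pi j/n)\,\lambda_j=\sum_k g_k\omega^{jk}$. For $j\not\equiv 0\pmod n$ the constant part $2\sum_k\omega^{jk}$ vanishes, so the right-hand side collapses to the single exceptional term (even $n$) or the two conjugate exceptional terms (odd $n$). This yields
\[
\lambda_j=\frac{n(-1)^j}{2\sin^2(\pi j/n)}\quad(n\text{ even}),\qquad
\lambda_j=\frac{n(-1)^j\cos(\pi j/n)}{2\sin^2(\pi j/n)}\quad(n\text{ odd}),
\]
valid for $j=1,\dots,n-1$, while the remaining eigenvalue $\lambda_n$ is the row sum and is positive. (Equivalently, these closed forms are exactly the evaluations of the sums $S_j$ in Theorem \ref{thm:cycle_eigenvalues}, so one could instead sum those series directly; the second-difference trick just reaches the same place faster.) Note in passing that no $\lambda_j$ vanishes, so $\Delta$ is nonsingular and $i_0(\Delta)=0$, consistent with the corollary reporting only $i_+$ and $i_-$.

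With the closed forms in hand the signs are transparent, and the counting is the last step. For even $n$ the sign of $\lambda_j$ is simply $(-1)^j$, giving $n/2-1$ positive and $n/2$ negative values among $j=1,\dots,n-1$; adding the positive $\lambda_n$ yields $i_+=i_-=n/2$. For odd $n$ the sign is $(-1)^j\operatorname{sign}(\cos(\pi j/n))$, and since $\cos(\pi j/n)>0$ exactly for $j<n/2$, the sign is $(-1)^j$ for $1\le j\le (n-1)/2$ and is reversed for $(n+1)/2\le j\le n-1$. The main obstacle I anticipate is precisely this odd-case bookkeeping: because the sign pattern is reflected across the antipode $j=n/2$, the tally of positive eigenvalues depends on the parity of $(n-1)/2$, which is exactly what produces the $n\equiv 1$ versus $n\equiv 3\pmod 4$ dichotomy. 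Carrying out the parity count over $1\le j\le (n-1)/2$, doubling by the reflection, and adding the positive $\lambda_n$ gives $i_+=\tfrac{n-1}{2}+1,\ i_-=\tfrac{n-1}{2}$ when $(n-1)/2$ is even and the reverse when it is odd, matching the two stated cases. The even case, by contrast, is immediate once the closed form is established.
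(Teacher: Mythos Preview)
Your proof is correct and takes a genuinely different route from the paper's. The paper treats the cosine sums of Theorem~\ref{thm:cycle_eigenvalues} as functions of a real variable $j$, locates their zeros at half-integers, and applies a first-derivative test to infer the sign of each eigenvalue; the counting then proceeds by tracking where the alternation pattern is interrupted near $j=n/2$. Your approach instead uses the second-difference identity $c_{k+1}-2c_k+c_{k-1}=2$ (with exceptional values at the antipode) to collapse the Fourier sums to the closed forms
\[
\lambda_j=\frac{n(-1)^j}{2\sin^2(\pi j/n)}\quad(n\text{ even}),\qquad
\lambda_j=\frac{n(-1)^j\cos(\pi j/n)}{2\sin^2(\pi j/n)}\quad(n\text{ odd}),
\]
from which the signs and the mod-$4$ dichotomy are read off by elementary parity bookkeeping. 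This is cleaner and more rigorous: it avoids the continuous-variable heuristics and the somewhat informal claim in the paper that the bracketed sum vanishes only at $j=n/2$. It also makes the nonsingularity of $\Delta$ (i.e., $i_0(\Delta)=0$) immediate. The paper's argument, by contrast, is more pictorial and does not produce explicit eigenvalue formulas, though it requires less algebraic setup. Your closed forms could in fact be reused later in the paper (e.g., they recover the expression $\lambda_j=(-1)^j m\csc^2(\pi j/(2m))$ that appears in Section~\ref{sus:saturated_cycles}).
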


\begin{proof}
In Theorem \ref{thm:cycle_eigenvalues}, $j$ is an integer. However, by thinking of it as a real number, we can glean information about how the values of the eigenvalues of the matrix are connected. So we will now think of (\ref{eq:odd_cycle_eigenvalues}) as an equation on the real numbers. The zeros for this equation will appear when $j = k/2$ for some $k \in \ZZ$. These will be the only zeros for this equation since the second half only becomes zero when all of the individual components are in sync and that happens only at $n/2$. Since the equation is continuous, this means that none of the eigenvalues will be zero. Now we'll use the first derivative test to find the quantity of positive eigenvalues and negative eigenvalues. The derivative with respect to $j$ of this equation is
\begin{align*}
    &-2\pi \sin(\pi j) \left(\cos\left(\frac{(n - 2)\pi}{n}j\right) + 4\cos\left(\frac{(n - 4)\pi}{n}j\right) + \cdots \right) \\
    &+2\cos(\pi j)\left(-\frac{(n - 2)\pi}{n}\sin\left(\frac{(n - 2)\pi}{n}j\right) - \frac{4(n - 4)\pi}{n}\sin\left(\frac{(n - 4)\pi}{n}j\right) - \cdots\right).
\end{align*}
We will only focus on substituting the zeros we found above into the first half of the expression since the second half is 0 at the values of $j = k/2$ for reasons stated above. The $-2\pi \sin(\pi j)$ part of the expression will alternate between positive and negative starting with negative when $j = k/2$. The other half of the expression will be positive at first since the rightmost term will dominate the others until it hits $n/2$ where it will be 0 for reasons stated above. After that it will be negative. 

We will now deal with the case when the number of vertices mod 4 equals 1. When $j = 1$, $\lambda_j$ is between two zeros. Since the derivative is negative at .5 and positive at 1.5, $\lambda_1$ will be negative. The eigenvalues will flip between positive and negative until it gets to the integer after $n/2$. Here there will be another positive eigenvalue in a row and then start alternating again. So there will be $\lceil n / 2 \rceil$ positive eigenvalues and $\lfloor n / 2 \rfloor$ negative eigenvalues.

It will be very similar to the case when the number of vertices mod 4 equals 3. Only in this case there will be a subsequent negative eigenvalue instead of a positive eigenvalue. 

Now for the even case, we'll focus on equation (\ref{eq:even_cycle_eigenvalues}). This equation is similar to equation (\ref{eq:odd_cycle_eigenvalues}) except it has an extra term. This will cause the derivative to strictly alternate between positive and negative values on the roots of the equation. Therefore, there will be $n/2$ positive and $n/2$ negative eigenvalues. 
\end{proof}

\section{An even length cycle with exactly one tree connected to a vertex}\label{sec:1even}

In the previous section, we examined the eigenvalues of a graph which was a single cycle. In this section, we will build upon the previous result and observe what occurs when we attach a tree to one node of the cycle graph. We will build up to this with a few lemmas.

\begin{lemma}\label{lem:inverse_even_cycle}Given an even cycle with $p$ nodes and distance squared matrix $\Delta$, the inverse of the matrix can be expressed as the circulant matrix 

\begin{equation*}
    \Delta^{-1} = \frac{1}{4\lambda m} \left( 2J +
    \begin{bmatrix}
        \textbf{0}^T & -\lambda & 2\lambda & -\lambda & \textbf{0}^T \\
         & \ddots & \ddots & \ddots &
    \end{bmatrix}
    \right),
\end{equation*} 
where $m = n/2$, $\lambda$ is the largest eigenvalue of $\Delta$, and the \textbf{0} vectors in the first row are size $(n/2) - 1$ and $(n/2) - 2$, respectively, and each row shifting one to the right, forming a circulant matrix.
\end{lemma}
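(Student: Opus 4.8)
The plan is to exploit the fact that both $\Delta$ and the proposed inverse $M$ are circulant, so that it suffices to verify $\Delta M = I$ directly (a one-sided inverse of a square matrix is two-sided, and $\Delta$ is invertible by Corollary \ref{cor:cycle_eigenvalues}, which gives $i_0(\Delta)=0$ for an even cycle). Writing $n = 2m$ and letting $\tilde C$ be the circulant matrix whose first row is $[\,\mathbf 0^T,\,-1,\,2,\,-1,\,\mathbf 0^T\,]$ with the $2$ in the $(m+1)$st position, so that the bracketed matrix in the statement is $C = \lambda\tilde C$, I would reduce the entire claim to the two matrix identities $\Delta J = \lambda J$ and $\Delta\tilde C = 4mI - 2J$. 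Granting these, $2\Delta J + \Delta C = 2\lambda J + \lambda(4mI - 2J) = 4\lambda m\, I$, and dividing by $4\lambda m$ gives $\Delta M = I$.

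The first identity is the statement that every row sum of $\Delta$ equals the largest eigenvalue $\lambda$. Since $\Delta$ is circulant, its row sums are all equal to a common value $S = \sum_v d(1,v)^2$, so $\Delta\one = S\one$ and hence $\Delta J = SJ$. To see that $S = \lambda$, I would observe that $\Delta$ is nonnegative, symmetric, and irreducible (the cycle is connected), so by Perron--Frobenius its spectral radius equals its largest eigenvalue and is attained by a positive eigenvector; the all-ones vector $\one$ is a positive eigenvector of $\Delta$, so it must be the Perron eigenvector and $S = \lambda$.

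The heart of the proof, and the step I expect to be most delicate, is the identity $\Delta\tilde C = 4mI - 2J$. Since a product of circulants is circulant, I would compute the entries as a discrete convolution: writing $\Delta_{ab} = g(a - b)$, where $g(t)$ is the squared cyclic distance, the $(a,b)$ entry depends only on $d = b - a$ and equals $-g(m-1-d) + 2g(m-d) - g(m+1-d)$, which is exactly $-1$ times the discrete second difference $g(e+1) - 2g(e) + g(e-1)$ evaluated at $e = m - d$. Because $g$ agrees with the genuine quadratic $t \mapsto t^2$ for $0 \le t \le m$ and with its mirror image for $m \le t \le n$, its second difference equals the constant $2$ everywhere except at the antipodal index $e \equiv m$, where the single ``kink'' in $g$ produces the value $2 - 4m$. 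Translating back, the diagonal entries of $\Delta\tilde C$ (where $d = 0$, so $e \equiv m$) equal $-(2 - 4m) = 4m - 2$, while every off-diagonal entry equals $-2$; that is, $\Delta\tilde C = (4m - 2)I - 2(J - I) = 4mI - 2J$. The main obstacle here is purely bookkeeping: matching the three prescribed entries $-\lambda, 2\lambda, -\lambda$ to the antipode and its two neighbors, and handling the wraparound of the cyclic distance correctly both at $e \equiv 0$ and at the antipode $e \equiv m$ when the second difference is evaluated.

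Combining the two identities yields $\Delta M = I$, and the invertibility of $\Delta$ then gives $M = \Delta^{-1}$, as claimed.
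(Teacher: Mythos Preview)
Your proposal is correct and follows essentially the same approach as the paper: both verify $\Delta M = I$ by showing that $\Delta$ times the bracketed circulant equals $\lambda(4mI - 2J)$ via an entrywise (second-difference) computation, and then use $\Delta J = \lambda J$ to finish. Your discrete-second-difference framing is a cleaner packaging of the same calculation, and you make explicit (via Perron--Frobenius) the identification of the row sum with the largest eigenvalue that the paper leaves implicit.
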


\begin{proof}
For convenience define 
\[B = 
    \begin{bmatrix}
        \textbf{0}^T & -\lambda & 2\lambda & -\lambda & \textbf{0}^T \\
         & \ddots & \ddots & \ddots &
    \end{bmatrix}.\]
We have
\begin{align*}
    I &= \frac{1}{4\lambda m} \left(\lambda
    \begin{bmatrix}
        4m -2 & -2 & -2 & \dots & -2 \\
        -2 & 4m -2 & -2 & \dots & -2 \\
        \vdots & \vdots & \ddots & & \vdots \\
        -2 & -2 & -2 & \dots & 4m -2\\
    \end{bmatrix} 
    + 2\lambda J\right)\\
    \\
    &= \frac{1}{4\lambda m} \left(\lambda
    \begin{bmatrix}
        -2m^2 + 4m - 2 + 2m^2 & -2 & \dots & -2 \\
        \vdots & \ddots & & \vdots \\
        -2 & \dots & -2 & -2 m^2 + 4m - 2 + 2m^2 
    \end{bmatrix}
    + 2\lambda J\right)\\
    \\
    &= \frac{1}{4\lambda m} \left(\lambda
    \begin{bmatrix}
        -(m - 1)^2 + 2m^2 - (m - 1)^2 & -2 & \dots & -2 \\
        \vdots & \ddots & & \vdots \\
        -2 & \dots & -2 & -(m - 1)^2 + 2m^2 - (m - 1)^2 \\
    \end{bmatrix}
    + 2\lambda J\right)\\
    &= \frac{1}{4\lambda m} (\Delta B + 2\lambda J).
\end{align*}

Because $\Delta$ is circulent, the sum of each of the rows is the same, i.e. $\lambda$. So $2\lambda J = 2\Delta J$. Thus 
\begin{align*}
    \frac{1}{4\lambda m} (\Delta B + 2\lambda J) &= \frac{1}{4\lambda m} (\Delta B + 2\Delta J) \\
    &= \frac{1}{4\lambda m}\Delta(B + 2J).
\end{align*}
\end{proof}
\begin{lemma}\label{lem:x_delta_inverse_x_positive}
    Let $\Delta$ be the distance squared matrix of an even cycle with $p$ nodes and a single pendant extending from the cycle. Let $\Tilde{\Delta}$ be all but the last row and column of $\Delta$, and let $\mathbf{x}$ be the first $p$ elements of the last row of $\Delta$. Then
    \[\mathbf{x}^T \Tilde{\Delta}^{-1} \mathbf{x} > 0.\]
        
\end{lemma}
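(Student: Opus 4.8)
The plan is to exploit the explicit inverse from Lemma \ref{lem:inverse_even_cycle} together with two ``free'' linear identities that kill most of the cross terms. By the circulant symmetry of the cycle I may assume the pendant is attached to vertex $1$, so that $\tilde\Delta$ is exactly the distance squared matrix of the even cycle $C_p$ and the previous lemma applies verbatim with $n=p$ and $m=p/2$. Writing $\mathbf d$ for the vector of cycle distances from vertex $1$ (so $\mathbf d_i = d_{\text{cycle}}(1,i)$) and $\mathbf c$ for the vector of squared cycle distances from vertex $1$, the entries of $\mathbf x$ are $(1+\mathbf d_i)^2 = 1 + 2\mathbf d_i + \mathbf c_i$, i.e.
\[ \mathbf x = \one + 2\mathbf d + \mathbf c. \]
Crucially, $\mathbf c$ is the first column of $\tilde\Delta$, so $\tilde\Delta^{-1}\mathbf c = \mathbf e_1$; and since $\tilde\Delta$ has constant row sums equal to its largest eigenvalue $\lambda$, we also have $\tilde\Delta^{-1}\one = \frac1\lambda\one$.

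Next I would expand the quadratic form using this decomposition. The two identities above make every term involving $\mathbf c$ elementary: $\mathbf c^T\tilde\Delta^{-1}\mathbf c = \mathbf c^T\mathbf e_1 = \mathbf c_1 = 0$, $\mathbf d^T\tilde\Delta^{-1}\mathbf c = \mathbf d_1 = 0$, and $\one^T\tilde\Delta^{-1}\mathbf c = 1$. Likewise $\one^T\tilde\Delta^{-1}\one = \frac{1}{\lambda}\|\one\|^2 = \frac{2m}{\lambda}$ and $\one^T\tilde\Delta^{-1}\mathbf d = \frac1\lambda\one^T\mathbf d$, where a direct count gives $\one^T\mathbf d = \sum_i d_{\text{cycle}}(1,i) = m^2$. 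After collecting, the whole expression reduces to
\[ \mathbf x^T\tilde\Delta^{-1}\mathbf x = \frac{2m}{\lambda} + \frac{4m^2}{\lambda} + 2 + 4\,\mathbf d^T\tilde\Delta^{-1}\mathbf d, \]
so everything comes down to the single term $\mathbf d^T\tilde\Delta^{-1}\mathbf d$.

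For that term I would substitute the explicit form $\tilde\Delta^{-1} = \frac{1}{4\lambda m}(2J+B)$ from Lemma \ref{lem:inverse_even_cycle}. Here $\mathbf d^T J\mathbf d = (\one^T\mathbf d)^2 = m^4$ is immediate, so the heart of the matter is $\mathbf d^T B\mathbf d$. The key observation is that $B$ acts as $\lambda$ times a discrete second-difference operator read off at the antipodal offset $m$, namely $(B\mathbf d)_i = \lambda\big(-\mathbf d_{i+m-1} + 2\mathbf d_{i+m} - \mathbf d_{i+m+1}\big)$ with indices mod $p$. Since $\mathbf d$ is a ``tent'' function on the cycle, piecewise linear with slope $\pm1$ and corners only at vertex $1$ (its minimum) and the antipode $m+1$ (its maximum), its discrete second difference vanishes except at those two corners. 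This gives $B\mathbf d = 2\lambda(\mathbf e_1 - \mathbf e_{m+1})$ and hence $\mathbf d^T B\mathbf d = 2\lambda(\mathbf d_1 - \mathbf d_{m+1}) = -2\lambda m$, so that $\mathbf d^T\tilde\Delta^{-1}\mathbf d = \frac{2m^4 - 2\lambda m}{4\lambda m} = \frac{m^3-\lambda}{2\lambda}$.

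Substituting back, the $\lambda$-terms cancel and I obtain
\[ \mathbf x^T\tilde\Delta^{-1}\mathbf x = \frac{2m^3 + 4m^2 + 2m}{\lambda} = \frac{2m(m+1)^2}{\lambda}, \]
which is strictly positive because $\lambda$, the largest eigenvalue of the nonnegative matrix $\tilde\Delta$, equals its positive constant row sum. The main obstacle is the computation of $\mathbf d^T B\mathbf d$: one must set up the circulant indexing of $B$ correctly and recognize the tent structure of $\mathbf d$ so that the second difference collapses to two point masses. The remaining work is bookkeeping that the two linear identities $\tilde\Delta^{-1}\mathbf c = \mathbf e_1$ and $\tilde\Delta^{-1}\one = \frac1\lambda\one$ render routine.
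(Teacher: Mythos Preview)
Your proof is correct and takes a cleaner route than the paper's. Both arguments start from the formula $\tilde\Delta^{-1}=\frac{1}{4\lambda m}(2J+B)$ of Lemma~\ref{lem:inverse_even_cycle}, but the paper works directly with $\mathbf x$: it computes $\one^T\mathbf x$ to obtain $2\mathbf x^T J\mathbf x$, then evaluates $B\mathbf x$ entry by entry (three cases, according to where the ``second-difference window'' of $B$ sits relative to the corner of $\mathbf x$), multiplies out $\mathbf x^T B\mathbf x$, adds the pieces, records $\lambda=m(2m^2+1)/3$, and finishes by asserting that the resulting degree-six polynomial expression in $m$ is positive. Your decomposition $\mathbf x=\one+2\mathbf d+\mathbf c$ is the key structural difference: the two identities $\tilde\Delta^{-1}\mathbf c=\mathbf e_1$ and $\tilde\Delta^{-1}\one=\frac1\lambda\one$ kill every cross term involving $\mathbf c$ or $\one$ at once, so the only genuine computation left is $\mathbf d^T B\mathbf d$, and the tent-function observation collapses $B\mathbf d$ to two nonzero entries. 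What this buys you is the closed form $\mathbf x^T\tilde\Delta^{-1}\mathbf x=\dfrac{2m(m+1)^2}{\lambda}$, which is manifestly positive and never requires the explicit value of $\lambda$; the paper's brute-force polynomial check is replaced by a factorization.
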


\begin{proof}
    Using Lemma \ref{lem:inverse_even_cycle} we have
        \[\mathbf{x}^T \Tilde{\Delta}^{-1} \mathbf{x}   = \frac{1}{4 \lambda m} \left(2 \mathbf{x}^T J \mathbf{x} + \mathbf{x}^T 
        \begin{bmatrix}
            \textbf{0}^T & -\lambda & 2\lambda & -\lambda & \textbf{0}^T \\
            & \ddots & \ddots & \ddots &
        \end{bmatrix}
        \mathbf{x} \right). \]
    We consider each term individually.
    
    Multiplying $\mathbf{x}^T$ by $J$ results in a vector whose entries are all equal to the sum of the entries of $\mathbf{x}$. The sum of the elements of $\mathbf{x}$ can be written as 
    \begin{align*}
        2\sum_{j = 1}^{m} j^2 + (m + 1)^2 - 1 &= \frac{m(m + 1)(2m + 1)}{3} + (m + 1)^2 - 1\\
        &= \frac{2m^3 + 6m^2 + 7m}{3}
    \end{align*}
    where $m = p / 2$. Hence
        \begin{align*}
            2\mathbf{x}^T J \mathbf{x} &= 2\left(\frac{2m^3 + 6m^2 + 7m}{3} \right)\textbf{1}^T\textbf{x}\\
            &= 2\left(\frac{2m^3 + 6m^2 + 7m}{3}\right)^2.
        \end{align*}
    Now we consider the second term. Let $B$ be as defined in Lemma \ref{lem:inverse_even_cycle}. Letting $j$ denote the index of the element of the vector $\mathbf{x}$ corresponding to the index of the first non-zero entry of the row of the matrix $B$
    by which we are multiplying $\mathbf{x}$, we see that the first entry of $B \mathbf{x}$ is given by
    \begin{align*}
        \begin{bmatrix}
            -\lambda & 2 \lambda & -\lambda
        \end{bmatrix}
        \begin{bmatrix}
            j^2\\
            (j + 1)^2\\
            j^2
        \end{bmatrix} &= -\lambda j^2 + 2 \lambda (j + 1) ^ 2 - \lambda j^2\\
        &= -2 \lambda j^2 + 2 \lambda (j^2 + 2j + 1)\\
        &= 4 \lambda j + 2 \lambda.
    \end{align*}
    Since the index of the first non-zero term will be $m + 1$, the first entry of our resultant matrix will be $4 \lambda (m + 1) + 2 \lambda$. The $m + 1$ entry of our resultant matrix will be given similarly, replacing $m + 1$ with $1$.  This gives us $-6 \lambda$ as the $m + 1$ term in our vector.  Each of the other entries will be given by
    \begin{align*}
        \begin{bmatrix}
            -\lambda & 2\lambda & -\lambda
        \end{bmatrix}
        \begin{bmatrix}
            j^2 \\
            (j + 1) ^ 2\\
            (j + 2) ^ 2\\
        \end{bmatrix}
        &= -\lambda j^2 + 2 \lambda (j + 1)^2 - \lambda (j+2)^2\\
        &= -\lambda j^2 + 2 \lambda (j^2 + 2j + 1) - \lambda (j^2 + 4j + 4)\\
        &= -2 \lambda.
    \end{align*}
    Thus, we have
    \[B \mathbf{x} = \begin{bmatrix}
        4 \lambda (m + 1) + 2\lambda & -2\lambda & \cdots & -2\lambda & -6\lambda & -2\lambda & \cdots & -2\lambda
    \end{bmatrix}^T.\]
    Left-multiplying by $\mathbf{x}^T$ gives us
    \begin{align*}
        \mathbf{x}^T B \mathbf{x} &= \mathbf{x}^T 
        \begin{bmatrix}
            4\lambda(m + 1) + 2\lambda & -2\lambda & \cdots & -2\lambda & -6\lambda & -2\lambda & \cdots & -2\lambda
        \end{bmatrix}^T\\
        &= 4\lambda(m + 1) + 2\lambda - 6\lambda(m + 1)^2 - 2\lambda \left(2\sum_{j = 1}^{m}(j^2) - 2\right)\\
        &= -\frac{4}{3}\lambda m^3 - 8 \lambda m^2 - \frac{26}{3} \lambda m + 4\lambda.
    \end{align*}
Now, adding the two terms together gives us
\begin{align*}
    \mathbf{x}^T \Tilde{\Delta}^{-1} \mathbf{x} &= \frac{1}{4 \lambda m} \left( 2\left(\frac{2m^3 + 6m^2 + 7m}{3}\right) ^ 2 -\frac{4}{3} \lambda m^3 - 8\lambda m^2 -\frac{26}{3}\lambda m + 4 \lambda \right)\\
    &= \frac{1}{4\lambda m}\left(\frac{8}{9}m^6 + \frac{16}{3}m^5 + \frac{128}{9}m^4 + \frac{56}{3}m^3 + \frac{98}{9}m^2 - \frac{4}{3}\lambda m^3 - 8\lambda m^2 -\frac{26}{3}\lambda m + 4\lambda\right).
\end{align*}
Because $\lambda$ is the column sum of $\Tilde{\Delta}$, it is given by $m(2m^2+1)/3$. The previous expression is greater than $0$ for any integer $m$.
    
\end{proof}

\begin{lemma}\label{lem:cycle_single_pendant}
    Let $U$ be a graph which contains exactly one cycle of length $p$ where $p$ is even and one pendant connected to one of the vertices. Let $\Delta$ be the distance squared matrix of $U$ and $m = p/2$. Then $i(\Delta) = (m, m+1, 0).$
\end{lemma}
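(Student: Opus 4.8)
The plan is to read off the inertia directly from the Haynsworth inertia additivity formula (Theorem~\ref{thm:haynsworth_additivity_formula}), using the two preceding lemmas as the technical engine. First I would put the pendant vertex last and write $\Delta$ in the block form
\[
\Delta = \begin{bmatrix} \Tilde{\Delta} & \mathbf{x} \\ \mathbf{x}^T & 0 \end{bmatrix},
\]
where $\Tilde{\Delta}$ is the $p \times p$ distance squared matrix of the even cycle and $\mathbf{x}$ is the vector of squared distances from the pendant to the cycle vertices, exactly as set up in Lemma~\ref{lem:x_delta_inverse_x_positive}. The bottom-right entry is $0$ since it is the squared distance of the pendant to itself.

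Before applying Haynsworth's formula I must check that the leading block $H_{11} = \Tilde{\Delta}$ is nonsingular. Since $p$ is even, Corollary~\ref{cor:cycle_eigenvalues} gives $i(\Tilde{\Delta}) = (m, m, 0)$; in particular $i_0(\Tilde{\Delta}) = 0$, so $\Tilde{\Delta}$ is invertible and the hypotheses of Theorem~\ref{thm:haynsworth_additivity_formula} are met. Applying the formula then gives
\[
i(\Delta) = i(\Tilde{\Delta}) + i\!\left(0 - \mathbf{x}^T \Tilde{\Delta}^{-1} \mathbf{x}\right).
\]
The Schur complement here is a single scalar, $-\mathbf{x}^T \Tilde{\Delta}^{-1} \mathbf{x}$, and by Lemma~\ref{lem:x_delta_inverse_x_positive} we have $\mathbf{x}^T \Tilde{\Delta}^{-1} \mathbf{x} > 0$, so this scalar is strictly negative and contributes $(0,1,0)$ to the inertia. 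Combining the two pieces yields
\[
i(\Delta) = (m, m, 0) + (0, 1, 0) = (m, m+1, 0),
\]
which is the desired conclusion.

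I do not anticipate a serious obstacle, since the genuinely computational content has already been absorbed into the earlier lemmas: the explicit form of $\Tilde{\Delta}^{-1}$ in Lemma~\ref{lem:inverse_even_cycle} and the positivity of the quadratic form in Lemma~\ref{lem:x_delta_inverse_x_positive}. The only point requiring care is bookkeeping, namely ordering the vertices so that the pendant occupies the last row and column, so that the block decomposition matches the conventions of Lemma~\ref{lem:x_delta_inverse_x_positive} and the sign result can be invoked verbatim. Everything else is a direct substitution into Haynsworth's formula together with the even-cycle inertia from Corollary~\ref{cor:cycle_eigenvalues}.
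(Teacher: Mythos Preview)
Your proposal is correct and follows essentially the same approach as the paper: block-decompose $\Delta$ with the pendant last, invoke Corollary~\ref{cor:cycle_eigenvalues} for $i(\Tilde{\Delta})=(m,m,0)$, apply Haynsworth's formula, and use Lemma~\ref{lem:x_delta_inverse_x_positive} to identify the Schur complement as a negative scalar. If anything, your write-up is slightly more careful in explicitly noting that $i_0(\Tilde{\Delta})=0$ ensures the invertibility hypothesis of Theorem~\ref{thm:haynsworth_additivity_formula} is satisfied.
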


\begin{proof}
    The distance squared matrix for $U$ will take the form
    \begin{equation*}
        \begin{bmatrix}
            \tilde{\Delta} & \textbf{x} \\
            \textbf{x}^\top & 0
        \end{bmatrix},
    \end{equation*}
    where $\tilde{\Delta}$ and $\textbf{x}$ take the same definitions as in Lemma \ref{lem:x_delta_inverse_x_positive}. By Theorem \ref{thm:haynsworth_additivity_formula}, Corollary \ref{cor:cycle_eigenvalues}, and Lemma \ref{lem:x_delta_inverse_x_positive}, we'll have that 
    \begin{align*}
        i(\Delta) &= i\left(\begin{bmatrix}
            \tilde{\Delta} & \textbf{x} \\
            \textbf{x}^\top & 0
        \end{bmatrix}\right) \\
        &= i(\tilde{\Delta}) + i(-\textbf{x}^\top \tilde{\Delta}^{-1} \textbf{x}) \\
        &= (m, m, 0) + (0, 1, 0) \\
        &= (m, m+1, 0).
    \end{align*}
\end{proof}

\begin{lemma}\label{lem:one_deg_2}
Let $U$ be a graph which contains exactly one cycle and assume that the cycle contains an even number of vertices. Also assume that of the vertices of the cycle, only one has degree 3 in $U$ and the rest have degree 2 in $U$. If $U$ has at least one vertex of degree 2 not part of the cycle, then $i_0(\Delta) \geq 1$. 
\end{lemma}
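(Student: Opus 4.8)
The plan is to reduce the statement to the singularity of a Schur complement supported on the tree, and then to use the explicit even‑cycle inverse (Lemma~\ref{lem:inverse_even_cycle}) to analyze that complement. Since the only degree‑$3$ vertex of the cycle is a single vertex $w$, the graph $U$ is an even cycle $C$ on $2m$ vertices with one tree $T$ attached at $w$, and by hypothesis $T$ contains a vertex $v$ of degree $2$. Order the vertices so that the cycle comes first and write
\[
\Delta=\begin{bmatrix}\Delta_C & X\\ X^\top & \Delta_T\end{bmatrix},
\]
where $\Delta_C$ is the distance squared matrix of $C$ and $\Delta_T$ that of $T$. By Corollary~\ref{cor:cycle_eigenvalues} the even cycle has $i_0(\Delta_C)=0$, so $\Delta_C$ is invertible and the Haynsworth formula (Theorem~\ref{thm:haynsworth_additivity_formula}) gives $i(\Delta)=i(\Delta_C)+i(S)$ with $S=\Delta_T-X^\top\Delta_C^{-1}X$. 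In particular $i_0(\Delta)=i_0(S)$, so it suffices to show that the Schur complement $S$ is singular.

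To compute $S$, note that every shortest path from a cycle vertex $c$ to a tree vertex $t$ passes through $w$, so $d_U(c,t)=d_C(c,w)+d_T(t,w)$ and hence the column of $X$ indexed by $t$ equals $\mathbf p^{(2)}+2\rho_t\,\mathbf p+\rho_t^2\,\one$ (taken entrywise), where $\mathbf p$ records $d_C(\cdot,w)$ over the cycle, $\mathbf p^{(2)}$ its entrywise square, and $\rho_t=d_T(t,w)$. Thus $X$ has rank at most $3$, its columns lying in $\operatorname{span}\{\one,\mathbf p,\mathbf p^{(2)}\}$. Applying Lemma~\ref{lem:inverse_even_cycle} and using that the circulant part $B$ of $\Delta_C^{-1}$ is an \emph{antipodal} second–difference operator, one checks that $\Delta_C^{-1}\one$, $\Delta_C^{-1}\mathbf p$, and $\Delta_C^{-1}\mathbf p^{(2)}$ are each combinations of $\one$, $\mathbf e_w$, and $\mathbf e_{\bar w}$, where $\bar w$ is the unique antipode of $w$ (unique because $C$ is even). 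Consequently $X^\top\Delta_C^{-1}X$ is an explicit rank‑$\le 3$ perturbation of $\Delta_T$, whose entries are expressible through the $\rho_t$, the column sums of $X$, and the values $d_U(c,t)^2$ at $c=w$ and $c=\bar w$.

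The crux is then to prove $\det S=0$. The role of the degree‑$2$ vertex is visible at the boundary case: when $T$ is a single pendant we have $\Delta_T=[0]$ and $S=-\,\mathbf x^\top\Delta_C^{-1}\mathbf x<0$ by Lemma~\ref{lem:x_delta_inverse_x_positive}, so $S$ is nonsingular (this recovers Lemma~\ref{lem:cycle_single_pendant}); it is precisely the internal path structure forced by having a degree‑$2$ vertex in $T$ that makes $S$ drop rank. I would establish the rank drop by exhibiting a kernel vector $\mathbf s$ of $S$ from the explicit formula above, the associated null vector of $\Delta$ being $\bigl(-\Delta_C^{-1}X\mathbf s,\ \mathbf s\bigr)$. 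Evenness is essential here: because the non‑constant part of $\Delta_C^{-1}$ is supported only on $w$ and its single antipode $\bar w$, the cycle contributes just a controlled, low‑rank correction to $\Delta_T$, which the tree's degree‑$2$ structure can cancel. This matches the smallest instance $C_4$ with the path $w\!-\!v\!-\!z$, where $S$ is a $2\times2$ matrix of rank $1$. I expect verifying $\det S=0$ (equivalently, pinning down $\mathbf s$ for an arbitrary $T$) to be the main obstacle, since it is the one step that genuinely couples the tree to the cycle rather than following formally from the reduction; an alternative that may simplify this step is to first quotient by the reflection symmetry of $U$ fixing $w$ and $\bar w$, whose antisymmetric part carries only the (nonzero) cycle eigenvalues, so that $i_0(\Delta)$ equals the nullity of $\Delta$ on the smaller symmetric subspace, a path‑plus‑tree shape in which $v$ together with the folded path supplies the needed degree‑$2$ structure.
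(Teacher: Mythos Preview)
Your reduction to the Schur complement is valid in principle, but as you yourself acknowledge, you have not carried out the decisive step: showing that $S=\Delta_T-X^\top\Delta_C^{-1}X$ is singular, or equivalently producing the kernel vector $\mathbf s$. Everything before that point is bookkeeping; the entire content of the lemma lives in that step, so what you have is a framework rather than a proof. The reflection-symmetry alternative at the end is likewise only a sketch.

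The paper bypasses the Schur machinery and simply writes down a null vector of $\Delta$. With $w$ the degree-$3$ vertex of the cycle, $\bar w$ its antipode, $w'$ the tree neighbour of $w$, and $v$ any degree-$2$ tree vertex with neighbours $v_-,v_+$, one checks by a two-line computation that the vector $\mathbf a$ supported on $\bar w,\,w,\,w'$ with entries $-1,\ m{+}1,\ -m$ satisfies $\Delta\mathbf a=-(m^2{+}m)\,\one$, while the second-difference vector $\mathbf b$ supported on $v_-,\,v,\,v_+$ with entries $\tfrac{m(m+1)}{2},\ -m(m{+}1),\ \tfrac{m(m+1)}{2}$ satisfies $\Delta\mathbf b=(m^2{+}m)\,\one$ (this is Lemma~\ref{lem:tree_null} rescaled). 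Hence $\Delta(\mathbf a+\mathbf b)=0$.

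Your setup does point toward this vector: the tree part of $\mathbf a+\mathbf b$ is your $\mathbf s$, and your (correct) observation that $\Delta_C^{-1}$ acts on $\operatorname{span}\{\one,\mathbf p,\mathbf p^{(2)}\}$ through $\one,\mathbf e_w,\mathbf e_{\bar w}$ explains why the cycle part $-\Delta_C^{-1}X\mathbf s$ is supported only on $w$ and $\bar w$. But rather than extracting $\mathbf s$ from the Schur complement, it is far simpler to guess the null vector by combining the degree-$2$ pattern of Lemma~\ref{lem:tree_null} with one ``antipodal'' combination on $\bar w,w,w'$, and then verify it directly.
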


\begin{proof}
Let $m$ be the length of the cycle divided by two. Let the vertices in the cycle be numbered from 1 to $2m$. Let $2m+1$ be the number of the vertex connected to $2m$, but not part of the cycle. Let $u$ be a degree two vertex not in the cycle. 

Let $n$ be the number of vertices in $U$. Let the entries of $\mathbf{a} \in \RR^n$ be defined by
\begin{equation*}
    a_j = \begin{cases}
        -1 & \text{if } j = m \\
        m + 1 & \text{if } j = 2m \\
        -m & \text{if } j = 2m + 1 \\
        0 & \text{otherwise}
    \end{cases}
\end{equation*}
and the entries of $\mathbf{b} \in \RR^n$ be defined by
\begin{equation*}
    b_j = \begin{cases}
        m(m + 1)/2 & \text{if } j = u - 1 \text{ or } j = u + 1\\
        -m(m + 1) & \text{if } j = u \\
        0 & \text{otherwise.}
    \end{cases}
\end{equation*}

Let $\Delta$ be the distance squared matrix of $U$ and $\mathbf{r}$ be an arbitrary row in $\Delta$. With the assumptions of the lemma, 
observe that
\begin{align*}
    \mathbf{r}^\top \textbf{a} &= (m - z)^2 (-1) + z^2 (m + 1) + (z + 1)^2 (-m) \\
    &= -(m^2 - 2mz + z^2) + z^2m + z^2 - (z^2 + 2z + 1)m \\
    &= -m^2 + 2mz - z^2 + z^2m + z^2 - z^2m - 2mz - m \\
    &= -m^2 - m
\end{align*}
and 
\begin{align*}
    \mathbf{r}^\top \textbf{b} &= \frac{(z + u - 1)^2 m(m + 1)}{2} + (z + u)^2 (-m(m + 1)) + \frac{(z + u + 1)^2 m(m + 1)}{2} \\
    &= \frac{(z^2 + u^2 + 1 + 2zu - 2z - 2u)(m^2 + m)}{2} - (z^2 + 2zu + u^2)(m^2 + m) \\
    &+ \frac{(z^2 + u^2 + 1 + 2zu + 2z + 2u)(m^2 + m)}{2} \\
    &= m^2 + m
\end{align*}
where $z$ corresponds to the location of $\mathbf{r}$ in $\Delta$. Combining these together, we have $\mathbf{r}^\top (\mathbf{a} + \mathbf{b}) = 0$, and so $\Delta(\textbf{a} + \textbf{b})=0$.  Hence, $i_0(\Delta) > 0$ by the Invertible Matrix Theorem. 
\end{proof}

We are now ready for the main result of this section.

\begin{theorem}\label{thm:p_cycle_one_tree}
Let $U$ be a graph on $n > 2$ vertices which has a cycle of $p$ vertices with a tree connected to one of the vertices of the cycle. Assume $p$ is even. Let $\Delta$ be the distance squared matrix on $U$. Then 
\begin{equation*}
    i_-(\Delta) = \ell + q,
\end{equation*}
where $\ell$ is the number of leaves of $U$ and $q$ is the number of negative eigenvalues of the $p$-cycle. 
In addition, if $U$ has $t$ vertices of degree 2 whose removal disconnects the graph then
\[
i_0(\Delta) = t.
\]
\end{theorem}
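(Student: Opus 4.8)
The plan is to prove the two statements by separate inductions on $n$, the number of vertices of $U$, holding the cycle length $p = 2m$ fixed and even, in close parallel with the proof of Theorem \ref{thm:tree_inertia}. Throughout, $q = m$, since Corollary \ref{cor:cycle_eigenvalues} gives $i_-(\Delta) = m$ for an even $p$-cycle. The two base cases are the bare cycle, where $\ell = 0$, $t = 0$, and Corollary \ref{cor:cycle_eigenvalues} yields $i_-(\Delta) = m = q$ and $i_0(\Delta) = 0$, and the cycle with a single pendant, where Lemma \ref{lem:cycle_single_pendant} gives $i(\Delta) = (m, m+1, 0)$, matching $\ell = 1$, $q = m$, and $t = 0$. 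For the inductive step I would look at the tree hanging from the cycle and, exactly as in the tree argument, pick a vertex $w$ of the tree as far as possible from the cycle that is adjacent to $k$ leaves and to exactly one non-leaf.

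To prove $i_-(\Delta) = \ell + q$ I would treat the two cases separately. If $k \geq 2$, the argument is identical to Case 1 of Theorem \ref{thm:tree_inertia}: Proposition \ref{lem:kpend} strips off $k-1$ negative eigenvalues, deleting all $k$ pendants produces a smaller graph in the same family with $\ell - (k-1)$ leaves and the same cycle, and the subadditivity/interlacing sandwich combined with the inductive hypothesis gives $i_-(\Delta) = (k-1) + (\ell - (k-1)) + q = \ell + q$. The case $k = 1$ is where the even cycle is essential. Here $w$ is a degree-$2$ vertex whose removal disconnects $U$, and deleting its unique leaf $v$ leaves $\ell$ unchanged. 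I would apply Lemma \ref{lem:one_deg_2} with $u = w$: the resulting null vector $\mathbf{a} + \mathbf{b}$ is supported on the cycle together with $w$ and its two neighbors, one of which is $v$, so it has a nonzero entry in the $v$-coordinate. A vector in $\ker \Delta$ with nonzero $v$-entry forces $\mathrm{rank}(\Delta) = \mathrm{rank}(\Delta(v))$, and then interlacing turns this into $i_-(\Delta) = i_-(\Delta(v)) = \ell + q$ by induction.

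For $i_0(\Delta) = t$ I would first establish the lower bound $i_0(\Delta) \geq t$ by exhibiting $t$ independent null vectors: the $t-1$ differences of degree-$2$ vectors supplied by Lemma \ref{lem:deg_2_verts}, together with the single vector $\mathbf{a} + \mathbf{b}$ of Lemma \ref{lem:one_deg_2}. The last vector is independent of the others because its cycle-supported part $\mathbf{a}$ is nonzero at a cycle vertex that none of the tree-supported vectors touch. For the matching upper bound I would run a second induction mirroring the three cases of the $i_0$ portion of Theorem \ref{thm:tree_inertia}, now using the already-proved value of $i_-$. When some vertex has more than two leaves, delete one leaf (which fixes $t$), and combine interlacing, the relation $i_-(\Delta) = i_-(\Delta_2) + 1$, and the inductive value $i_0(\Delta_2) = t$ through a dimension count to force $i_0(\Delta) = t$; when every vertex has at most one leaf, delete a deepest leaf (dropping $t$ by one) and invoke the rank-preservation above to get $i_0(\Delta) = i_0(\Delta_2) + 1 = t$; and the borderline case of a vertex adjacent to exactly two leaves is handled as in Case 3 there.

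The main obstacle is the bookkeeping that each reduction stays inside the hypotheses in force, namely an even cycle with the tree attached at a single vertex so that exactly one cycle vertex has higher degree, and that $\ell$ and $t$ change in precisely the advertised way under each deletion; in particular one must check that a degree-$2$ disconnecting vertex becomes a leaf, decrementing $t$ but leaving $\ell$ fixed, and that the single branch point persists so that Lemma \ref{lem:one_deg_2} remains applicable. The most delicate point, which drives both the $k=1$ case and Case 2 of the $i_0$ argument, is confirming that the null vector produced by Lemma \ref{lem:one_deg_2} genuinely has a nonzero entry at the deleted leaf, since this is exactly what upgrades the interlacing inequalities to the rank equality $\mathrm{rank}(\Delta) = \mathrm{rank}(\Delta(v))$.
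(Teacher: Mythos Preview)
Your proposal is correct and follows essentially the same inductive strategy as the paper's proof, with the same base case (Lemma \ref{lem:cycle_single_pendant}), the same case split on the number $k$ of leaves at $w$, and the same lower bound for $i_0$ via the $t-1$ tree-supported null vectors plus the single null vector from Lemma \ref{lem:one_deg_2}. The one noteworthy difference is that in the $k=1$ case for $i_-$, the paper splits into two subcases according to whether $w$ is the unique degree-$2$ cut vertex (using Lemma \ref{lem:one_deg_2} only when it is, and falling back on the tree argument of Lemma \ref{lem:tree_null} otherwise), whereas you apply Lemma \ref{lem:one_deg_2} uniformly with the degree-$2$ vertex taken to be $w$ itself; your streamlining is valid since the $\mathbf b$-part of the null vector always contributes $m(m+1)/2\neq 0$ at the deleted leaf.
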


\begin{proof}
We will use induction on $n$. Note that Lemma \ref{lem:cycle_single_pendant} takes care of the case when when a single leaf is connected to the cycle. So let $U$ be a unicyclic graph on $n \geq 4$ vertices and assume the conclusion is true for all unicyclic graphs on fewer than $n$ vertices. 

Consider a diametrical path in $U$. There exists a vertex $w$ such that $w$ has exactly one non-leaf neighbor $u$.

\begin{center}
\begin{tikzpicture}[roundnode/.style={circle, draw=black, fill=white, very thick},scale=2]
		\draw
		(0, 0)node{}--(-0.5, 0.5)node{};
		\draw
		(0, 0)node{}--(-0.5, -0.5)node{};
		\draw[dashed]
		(-0.5, 0.5)node{}--(-1, 0.5)node{};
		\draw[dashed]
		(-0.5, -0.5)node{}--(-1, -0.5)node{};
		\draw
		(0.5, 0)node{}--(1.25, 0.5)node{$\bullet$} (1.5, 0.5)node{1}; 
		\draw
		(0.5, 0)node{}--(1.25, 0.2)node{$\bullet$} (1.5, 0.2)node{2}; 
		\draw
		(0.5, 0)node{}--(1.25, -0.15)node{} (1.5, -0.1)node{\vdots}; 
		\draw
		(0.5, 0)node{}--(1.25, -0.5)node{$\bullet$} (1.5, -0.5)node{$k$}; 
		\draw
		(0, 0)node{$\bullet$}--(0.5, 0)node{$\bullet$} (0, -0.2)node{$u$} (0.5, -0.2)node{$w$}; 
\end{tikzpicture}
\end{center}

Let $w$ be adjacent to $k$ leaves, and label leaves $1, \ldots, k$, $w$ is $k + 1$, and $u$ is $k + 2$. 

\begin{description}
    \item[Case 1:] Suppose $k \geq 2$. 
    By Proposition \ref{lem:kpend}, $i_-(\Delta) = k-1 + i_-\left(\Delta_{k+1} + \left(4-\frac{4}{k}\right)e_ke_k^T\right)$.
    Let $\frac{4(k - 1)}{k} \mathbf{e}_k \mathbf{e}_k^\top + \Delta_k = F$ and notice $F(1) = \Delta_{k + 1}$. Notice that $U \setminus {1, \ldots, k}$ is a tree with $\ell - (k - 1)$ leaves and at least three vertices. By the inductive hypothesis,
    \begin{equation*}
        i_-(\Delta_{k + 1}) = \ell + q - (k - 1).
    \end{equation*}
    By interlacing, $i_-(F) \geq \ell + q - (k - 1)$. Since $F = \frac{4(k - 1)}{k} \mathbf{e}_k \mathbf{e}_k^\top + \Delta_k$ and $\frac{4(k - 1)}{k} > 0$,
    \begin{align*}
        i_-(F) &\leq i_-\left(\frac{4(k - 1)}{k} \mathbf{e}_k \mathbf{e}_k^\top\right) + i_-(\Delta_k) \\
        &\leq 0 + \ell + q - (k - 1) \\
        &= \ell + q - (k - 1).
    \end{align*}
    Thus, $i_-(F) = \ell + q - (k - 1)$. Now notice 
    \begin{align*}
        i_-(\Delta) &= k-1 + i_-(F) \\
        &= (k - 1) + \ell + q - (k - 1) \\
        &= \ell + q.
    \end{align*}
    
    \item[Case 2:] Suppose $k = 1$ and $w$ is the only degree 2 vertex in the graph. Let $\Delta_2 = \Delta(1)$. By our inductive hypothesis, $i_-(\Delta_2) = \ell + q$. By interlacing, $i_-(\Delta) \geq \ell + q$. Since $p$ is even and $w$ is a degree 2 vertex not in the cycle, we can use Lemma \ref{lem:one_deg_2} to deduce that the column in $\Delta$ corresponding to $w$ is a linear combination of other columns in $\Delta$. Thus, $\text{rank}(\Delta) = \text{rank}(\Delta_2)$ and $i_-(\Delta) \leq \ell + q$. 
    
    \item[Case 3:] Suppose $k = 1$ and there is some other degree 2 vertex in the graph besides $w$. By an argument similar to the first Case 2 of Theorem \ref{thm:tree_inertia}, we'll have that $i_-(\Delta) = \ell + q$. 
\end{description}

We now proceed to prove that $i_0(\Delta)=t$.  Note that by Lemmas \ref{lem:tree_null}, \ref{lem:tree_lin_ind}, and \ref{lem:deg_2_verts}, we have $t-1$ linearly independent vectors in the nullspace of $\Delta$, and each of the vectors constructed there are zero on all the vertices of the cycle.  Furthermore, from Lemma \ref{lem:one_deg_2}, we have another vector in the nullspace of $\Delta$, and the vector constructed in the proof of Lemma \ref{lem:one_deg_2}, is linearly independent from these other vectors since it is nonzero on one of the vertices of the cycle.  Thus the nullspace of $\Delta$ is at least $t$ dimensional, so $i_0(\Delta)\geq t$.

To show that $i_0(\Delta)\leq t$ we proceed by induction on $n$.  We know the result is true for cycles, and by Lemmas \ref{lem:cycle_single_pendant} and \ref{lem:one_deg_2}, we know the result is true for a cycle with a single pendant attached, or a single path of length 2 attached, so suppose that $G$ has more vertices than either of these. Consider the same $w$ as above, adjacent to $k$ leaves.  We again consider three cases.

\begin{description}
    \item[Case 1:] Suppose $k \geq 2$.  Let $U'$ be the same as $U$ with one of these $k$ leaves deleted and let $\Delta_U$ and $\Delta_{U'}$ denote the distance squared matrices on $U$ and $U'$, respectively.  Note that deleting a leaf from $U$ does not affect the number of degree $2$ vertices in the graph.  We have, by interlacing, $i_+(\Delta_U) \geq i_+(\Delta_{U'})$ and from Theorem \ref{thm:tree_inertia} deleting a leaf from $U$ results in $i_-(\Delta_U) = i_-(\Delta_{U'}) + 1$. Thus rank$(\Delta_U) = i_+(\Delta_U) + i_-(\Delta_U)  = i_+(\Delta_{U'}) + i_-(\Delta_{U'}) + 1$. So the number of zero eigenvalues did not change, and since the number of degree 2 vertices did not change, $i_0(\Delta_U) = t$.

    \item[Case 2:] Suppose $k = 1$. Deleting the pendant vertex does not change the number of leaves of the graph, but decreases the number of degree $2$ vertices by one. Using Lemma \ref{lem:deg_2_verts} and the same reasoning in case 2 of Theorem \ref{thm:tree_inertia}, we have rank$(\Delta_U) = $ rank$(\Delta_U')$. Because interlacing tells us that $i_+(\Delta_U) \geq i_+(\Delta_U')$ and $i_-(\Delta_U) \geq i_- (\Delta_U')$, we have $i_+(\Delta_U) = i_+(\Delta_U')$ and $i_-(\Delta_U) = i_-(\Delta_U')$. Therefore $i_0(\Delta_U) \geq i_0(\Delta_U')$. By the inductive hypothesis $i_0(\Delta_U') = t - 1$, so $i_0(\Delta_U) = t$.
    
    \item[Case 3:] Suppose $k = 2$.  We can call $\Delta_{U''}$ the distance squared matrix for the tree after removing both of its leaves.  Then, by similar reasoning to that in case 3 of Theorem \ref{thm:tree_inertia}, we will have rank$\Delta_{U''} = $ rank$\Delta_U - 2$.  The distance squared matrix of $\Delta_U$ will be identical to that described in Theorem \ref{thm:tree_inertia} except $\Delta_{U''}$ will be different. However, this still leads to the conclusion that adding both of our leaves back onto $\Delta_{U''}$ has no effect on the number of zero eigenvalues of $\Delta_{U}$, so $i_0(\Delta_U) = t$.
    
\end{description}
\end{proof}

\section{Other Families of Unicyclic Graphs}\label{sec:ex}

Theorem \ref{thm:p_cycle_one_tree} generalizes Theorem \ref{thm:tree_inertia} from trees to a particular class of unicyclic graphs.  It is natural to try to generalize further to the class of all unicyclic graphs.  In this section, we will see that the result of Theorem \ref{thm:p_cycle_one_tree} does not hold for all unicyclic graphs, but we will see that some information can be given for certain specific families.




\subsection{Saturated Cycles}\label{sus:saturated_cycles}

Consider the following graph
\begin{center}
    \begin{tikzpicture}
        \draw (0.9807852804032304,0.19509032201612825)node{$\bullet$}
        --(0.8314696123025452,0.5555702330196022)node{$\bullet$}
        --(0.5555702330196023,0.8314696123025452)node{$\bullet$};
        \draw[dashed] (0.5555702330196023,0.8314696123025452)node{$\bullet$}
        --(0.19509032201612833,0.9807852804032304)
        --(-0.1950903220161282,0.9807852804032304)
        --(-0.555570233019602,0.8314696123025453)node{$\bullet$};
        \draw (-0.555570233019602,0.8314696123025453)node{$\bullet$}
        --(-0.8314696123025453,0.5555702330196022)node{$\bullet$}
        --(-0.9807852804032304,0.1950903220161286)node{$\bullet$}
        --(-0.9807852804032304,-0.19509032201612836)node{$\bullet$}
        --(-0.8314696123025455,-0.555570233019602)node{$\bullet$}
        --(-0.5555702330196022,-0.8314696123025452)node{$\bullet$}
        --(-0.19509032201612866,-0.9807852804032303)node{$\bullet$}
        --(0.1950903220161283,-0.9807852804032304)node{$\bullet$}
        --(0.5555702330196026,-0.831469612302545)node{$\bullet$}
        --(0.8314696123025452,-0.5555702330196022)node{$\bullet$}
        --(0.9807852804032303,-0.19509032201612872)node{$\bullet$}
        --(0.9807852804032304,0.19509032201612825)node{$\bullet$};
        \draw (0.9807852804032304,0.19509032201612825)--(1.4711779206048456,0.2926354830241924)node{$\bullet$};
        \draw (0.8314696123025452,0.5555702330196022)--(1.2472044184538178,0.8333553495294033)node{$\bullet$};
        \draw (0.5555702330196023,0.8314696123025452)--(0.8333553495294035,1.2472044184538178)node{$\bullet$};
        \draw (-0.555570233019602,0.8314696123025453)--(-0.8333553495294029,1.247204418453818)node{$\bullet$};
        \draw (-0.8314696123025453,0.5555702330196022)--(-1.247204418453818,0.8333553495294033)node{$\bullet$};
        \draw (-0.9807852804032304,0.1950903220161286)--(-1.4711779206048456,0.2926354830241929)node{$\bullet$};
        \draw (-0.9807852804032304,-0.19509032201612836)--(-1.4711779206048456,-0.2926354830241925)node{$\bullet$};
        \draw (-0.8314696123025455,-0.555570233019602)--(-1.2472044184538182,-0.8333553495294029)node{$\bullet$};
        \draw (-0.5555702330196022,-0.8314696123025452)--(-0.8333553495294033,-1.2472044184538178)node{$\bullet$};
        \draw (-0.19509032201612866,-0.9807852804032303)--(-0.292635483024193,-1.4711779206048454)node{$\bullet$};
        \draw (0.1950903220161283,-0.9807852804032304)--(0.29263548302419246,-1.4711779206048456)node{$\bullet$};
        \draw (0.5555702330196026,-0.831469612302545)--(0.8333553495294039,-1.2472044184538176)node{$\bullet$};
        \draw (0.8314696123025452,-0.5555702330196022)--(1.2472044184538178,-0.8333553495294033)node{$\bullet$};
        \draw (0.9807852804032303,-0.19509032201612872)--(1.4711779206048454,-0.29263548302419307)node{$\bullet$};
    \end{tikzpicture}
\end{center}
This graph is a cycle of length $p$ with a pendant extending from each node on the cycle. 
A distanced squared matrix for this graph can take the form 
\begin{equation*}
    H = 
    \begin{bmatrix}
    A & B \\
    B & C
    \end{bmatrix},
\end{equation*}
where $A$, $B$, and $C$ are circulant matrices of size $p \times p$. Here the first $p$ rows and columns are indexed by the nodes of the main cycle and the last $p$ by the pendant vertices.  Since $A$, $B$, and $C$ are all $p \times p$ circulant matrices, $A$, $B$, and $C$ will all have the same eigenvectors by Lemma \ref{lem:eigenstuff_of_circulant_matrices}. Thus, if $\mathbf{x}$ is an eigenvector of $A$ then $A\mathbf{x} = \lambda \mathbf{x}$, $B\mathbf{x} = \mu \mathbf{x}$, and $C\mathbf{x} = \rho \mathbf{x}$, where $\lambda$, $\mu$, and $\rho$ are eigenvalues of $A$, $B$, and $C$ respectively corresponding to the eigenvector $\mathbf{x}$. 

To find the eigenvalues of $H$, we will assume that an eigenvector of $H$ will take the form $\begin{bmatrix}a \mathbf{x} & b \mathbf{x}\end{bmatrix}^T$. Then an eigenvalue $h$ of 
$H$ satisfies
\begin{equation*}
    \begin{bmatrix}
    A & B \\
    B & C
    \end{bmatrix}
    \begin{bmatrix}
    a \mathbf{x} \\
    b \mathbf{x}
    \end{bmatrix}
    =
    h
    \begin{bmatrix}
    a \mathbf{x} \\
    b \mathbf{x}
    \end{bmatrix}
.\end{equation*}
Thus
\begin{align*}
    \begin{bmatrix}
    (a\lambda +  b\mu)\mathbf{x} \\
    (a\mu +  b\rho)\mathbf{x}
    \end{bmatrix}
    &=
    \begin{bmatrix}
    (ha) \mathbf{x} \\
    (hb) \mathbf{x}
    \end{bmatrix}
.\end{align*}
Solving for $h$ gives $h = (a\lambda + b\mu)/a$ and $h = (a\mu + b\rho)/b$. 
This gives
\begin{align*}
    \frac{b}{a} - \frac{a}{b} &= \frac{\rho - \lambda}{\mu}
.\end{align*}
Set $b = 1$, then solve for $a$ to obtain
\begin{align*}
    1 - a^2 &= a\left(\frac{\rho - \lambda}{\mu}\right) 
\end{align*}
which yields
\begin{equation*}
    a = \frac{-\left(\frac{\rho - \lambda}{\mu}\right) \pm \sqrt{\left(\frac{\rho - \lambda}{\mu}\right)^2 + 4}}{2}.
\end{equation*}
Then we can solve for $h$ using $a\mu + b\rho = hb$. Since $b = 1$, we have $a\mu + \rho = h$. Hence,
\begin{equation} \label{eq:1}
    h = \frac{\rho}{2} + \frac{\lambda}{2} \pm \sqrt{\frac{(\rho - \lambda)^2}{4} + \mu^2}
.\end{equation}
So the eigenvalues of $H$ will all correspond to values of the expression above for each combination of $\lambda$, $\mu$, $\rho$ corresponding to $\textbf{x}$.

Now we need to find expressions for $\lambda$, $\mu$, and $\rho$, which depend on $A$, $B$, and $C$. Since these matrices are circulant, then by Lemma \ref{lem:eigenstuff_of_circulant_matrices}, the eigenvalues are of the form $c_1 + c_2\omega_j + \cdots + c_p\omega_j^{p - 1}$ where $c_1, \ldots, c_p$ are the entries of the first row and $\omega = e^{2\pi i/p}$. 

We are going to consider an even cycle now. The first row of $A$ will take the form 
\begin{equation*}
    \begin{bmatrix}0 & 1 & 4 & \cdots & (m - 1)^2 & m^2 & (m - 1)^2 & \cdots & 4 & 1\end{bmatrix},
\end{equation*}
where $m = p/2$.\\
So for every index $j \in \{0, \ldots, p - 1\}$, we will have, omitting some tedious calculations, 
{\allowdisplaybreaks
\begin{align*}
\begin{split}
    \lambda_j &= \omega^j + 4\omega^{2j} + \cdots + (m - 1)^2\omega^{(m - 1)j} + m^2\omega^{mj} + (m - 1)^2\omega^{(p - m + 1)j} + \cdots \\
    &\qquad + 4\omega^{(p - 2)j} + \omega^{(p - 1)j} \\
\end{split}\\
    &= 2\sum_{\ell = 1}^{m - 1} \ell^2 \cos\left(\frac{2\pi}{p} \ell j\right) + m^2\cos\left(\pi j\right).
\end{align*}
}
The first row in $B$ will take the form
\begin{equation*}
    \begin{bmatrix}
        1 & 4 & 9 & \cdots & m^2 & (m + 1)^2 & m^2 & \cdots & 9 & 4
    \end{bmatrix}.
\end{equation*}
We can do a similar calculation above for every $j \in \{0, \ldots, p - 1\}$ to find
\begin{equation*}
    \mu_j = 2\sum_{\ell = 1}^{m - 1} (\ell + 1)^2 \cos\left(\frac{2\pi}{p} \ell j\right) + (m + 1)^2\cos\left(\pi j\right) + 1.
\end{equation*}
The first row in $C$ will take the form
\begin{equation*}
    \begin{bmatrix}
        0 & 9 & 16 & \cdots & (m + 1)^2 & (m + 2)^2 & (m + 1)^2 & \cdots & 16 & 9
    \end{bmatrix}.
\end{equation*}
Again performing similar calculations for each $j \in \{0, \ldots, p - 1\}$, we find
\begin{equation*}
    \rho_j = 2\sum_{\ell = 1}^{m - 1} (\ell + 2)^2 \cos\left(\frac{2\pi}{p} \ell j\right) + (m + 2)^2\cos\left(\pi j\right).
\end{equation*}
Using algebraic and trigonometric manipulations, we can find that 
\begin{align*}
    \lambda_j &= 2\sum_{\ell = 1}^{m - 1} \ell^2 \cos\left(\frac{2\pi}{p} \ell j\right) + m^2\cos\left(\pi j\right) \\
    &= m\cos(\pi j)\left(-m + \csc\left(\frac{\pi}{2m}j\right)^2\right) + \frac{1}{4} \left(4m^2\cot\left(\frac{\pi}{2m}j\right) - \csc\left(\frac{\pi}{2m}j\right)^4\sin\left(\frac{\pi}{2m}j\right)\right)\sin\left(\pi j\right) \\ 
    &~~~+ m^2\cos(\pi j).
\end{align*}
Since $j$ is an integer, the $\sin(\pi j)$ will always evaluate to 0. Therefore, we have
\begin{align*}
    \lambda_j &= m\cos(\pi j)\left(-m + \csc\left(\frac{\pi}{2m}j\right)^2\right) + m^2\cos(\pi j) \\
    &= m\cos(\pi j)\csc\left(\frac{\pi}{2m}j\right)^2 \\
    &= (-1)^j m \csc\left(\frac{\pi}{2m}j\right)^2.
\end{align*}

Using similar manipulations and the fact that $j$ is an integer, we can find that
\begin{equation*}
    \mu_j = (-1 + (1 + m)(-1)^j)\csc\left(\frac{\pi}{2m}j\right)^2
\end{equation*}
and
\begin{equation*}
    \rho_j = -4 + (-2 + (2 + m)(-1)^j)\csc\left(\frac{\pi}{2m}j\right)^2.
\end{equation*}

Assume $j$ is even and not 0. Then
\begin{equation*}
    \lambda_j = m\csc\left(\frac{\pi j}{2m}\right)^2,
\end{equation*}
\begin{equation*}
    \mu_j = m\csc\left(\frac{\pi j}{2m}\right)^2,
\end{equation*}
and
\begin{equation*}
    \rho_j = -4 + m\csc\left(\frac{\pi j}{2m}\right)^2
.\end{equation*}
Now that we have expressions for the eigenvalues of $A$, $B$, $C$, we will determine the sign of the eigenvalues of the larger matrix $H$. Equation $(\ref{eq:1})$ gives the expression for these eigenvalues. Note that it contains a plus-or-minus sign and that for the positive case of (\ref{eq:1}), the output of the square root will always be positive. Adding $\rho$ and $\lambda$, we get
\begin{equation*}
    -4+2m\csc\left(\frac{\pi j}{2m}\right)^{2}.
\end{equation*}
Observe that the cosecant squared term will have a minimum value of 1. Since $m > 1$, this expression must be nonnegative. Therefore, when $j$ is even, and we are concerned with the positive side case of (\ref{eq:1}), we will only get positive eigenvalues from (\ref{eq:1}). 

Now, for the negative case of (\ref{eq:1}), we will show that
\[\frac{\rho}{2} + \frac{\lambda}{2} - \sqrt{\frac{(\rho - \lambda)^2}{4} + \mu^2} < 0.\]
Manipulating this, we find that

    \[\rho \lambda < \mu^2.\]
Using the definitions of $\lambda$, $\mu$, and $\rho$ above and letting $a = \csc\left(\frac{\pi j}{2m}\right)$, notice that
\begin{equation*}
    \rho \lambda = -4ma^2 + m^2a^2 < m^2a^2 = \mu^2.
\end{equation*}
So all the eigenvalues that are produced under those conditions will be negative. This corresponds to the number of negative eigenvalues of the cycle or $q$ in the above theorem. 

When $j$ is zero, $\ell$ will always be positive, so the summations above will evaluate to be positive.

Now assume $j$ is odd. Then
\begin{equation*}
    \lambda_j = -m \csc\left(\frac{\pi}{2m}j\right)^2,
\end{equation*}
\begin{equation*}
    \mu_j = (-2 - m) \csc\left(\frac{\pi}{2m}j\right)^2,
\end{equation*}
and
\begin{equation*}
    \rho_j = -4 + (-4 - m) \csc\left(\frac{\pi}{2m}j\right)^2
.\end{equation*}
So then, once again omitting some calculations,
\small
\begin{align*}
    \frac{\rho_j}{2} + \frac{\lambda_j}{2} \pm \sqrt{\frac{(\rho_j - \lambda_j)^2}{4} + \mu_j^2} &= -2 - (2 + m) \csc\left(\frac{\pi}{2m}j\right)^2 \\
    &\pm \sqrt{4 + 8\csc\left(\frac{\pi}{2m}j\right)^2 + (m^2 + 4m + 8) \csc\left(\frac{\pi}{2m}j\right)^4}
.\end{align*}
\normalsize
Essentially, this equation will give pairs of eigenvalues for the saturated graph. Notice that for the negative case, the square root function is defined to give positive values, $m$ is positive, and the cosecant squared term will also be positive, so the above expression must be negative no matter which value of $j$ we choose. 

For the positive case, we need to set the equation to zero and find where the roots are so we can see which values of $j$ produce positive or negative eigenvalues. We will be using the continuity of the sine function and the $1/x^2$ function in the range $(0, p)$. Setting the equation to 0 and simplifying gives
\begin{equation*}
    \frac{1}{\sin\left(\frac{\pi}{2m} j\right)^2} = m
\end{equation*}
or 
\begin{equation*}
    \frac{1}{\sin\left(\frac{\pi}{2m} j\right)^2} = 0
.\end{equation*}
The second equation is impossible, so we will proceed with solving for $j$ in the first equation. This gives the roots
\begin{equation*}
    \sin^{-1}\left(\frac{1}{\sqrt{m}}\right)\frac{2m}{\pi}
\end{equation*}
for a given $m$. The inverse sine can output multiple values, but we are only concerned with the range from 0 to $p$. When we apply this restriction, this expression will give two roots in the range from 0 to $p$. 


Since a cycle needs at least 3 vertices, $m$ must be greater than $1$. When we substitute $m$ for $j$ in the equation above, we get $-4 - m + \sqrt{m^2 + 4m + 20}$. By inspection, we can see that $m$ will always be between the two roots. We will now show that this resulting expression is always negative. We need to show $0 > -4 - m + \sqrt{m^2 + 4m + 20}$. This is equivalent to showing
\begin{equation*}
    m^2 + 8m + 16 > m^2 + 4m + 20.
\end{equation*}
Since $m > 1$, this expression is true. Therefore, by continuity we'll have that all the eigenvalues produced between the two roots will be negative and the eigenvalues produced above and below the roots, but within the range $(0, p)$ will be positive.  

Now note that if all the eigenvalues were negative in the case that $j$ is odd, the number of those negative eigenvalues would correspond with the $\ell$ term in the above theorem. However as shown above, there are a few eigenvalues that are positive in this case. So let $r$ be the lower root. When we apply the function
\begin{equation*}
   \beta = 2\left\lfloor{\frac{\lfloor{r}\rfloor + 1}{2}}\right\rfloor
\end{equation*}
this will give us the number of positive eigenvalues that we need to subtract off to get the lower bound, $\ell + q - \beta$, of the theorem. Overall in the case when the cycle is saturated in the unicyclic graph, we will have 
\begin{equation*}
    i_-(\Delta) = \ell + q - \beta
.\end{equation*}

\subsection{Cycles with a fixed number of pendants per vertex}\label{subsec:manypendant}


In this section we will consider unicyclic graphs consisting of a single even cycle and a fixed number of pendants at least the length of the cycle divided by two coming off of each of its vertices.

Here is an example with a graph that contains a cycle of length six.

\begin{center}
    \begin{tikzpicture}
        \draw
(0.750, 0.000)node{$\bullet$}--(0.375, 0.650)node{$\bullet$}--(-0.375, 0.650)node{$\bullet$}--(-0.750, 0.000)node{$\bullet$}--(-0.375, -0.650)node{$\bullet$}--(0.375, -0.650)node{$\bullet$}--(0.750, -0.000)node{$\bullet$};
\draw
(0.750, 0.000)--(1.357, -0.441)node{$\bullet$};
\draw
(0.750, 0.000)--(1.500, 0.000)node{$\bullet$};
\draw
(0.750, 0.000)--(1.357, 0.441)node{$\bullet$};
\draw
(0.375, 0.650)--(1.060, 0.955)node{$\bullet$};
\draw
(0.375, 0.650)--(0.750, 1.299)node{$\bullet$};
\draw
(0.375, 0.650)--(0.297, 1.395)node{$\bullet$};
\draw
(-0.375, 0.650)--(-0.297, 1.395)node{$\bullet$};
\draw
(-0.375, 0.650)--(-0.750, 1.299)node{$\bullet$};
\draw
(-0.375, 0.650)--(-1.060, 0.955)node{$\bullet$};
\draw
(-0.750, 0.000)--(-1.357, 0.441)node{$\bullet$};
\draw
(-0.750, 0.000)--(-1.500, 0.000)node{$\bullet$};
\draw
(-0.750, 0.000)--(-1.357, -0.441)node{$\bullet$};
\draw
(-0.375, -0.650)--(-1.060, -0.955)node{$\bullet$};
\draw
(-0.375, -0.650)--(-0.750, -1.299)node{$\bullet$};
\draw
(-0.375, -0.650)--(-0.297, -1.395)node{$\bullet$};
\draw
(0.375, -0.650)--(0.297, -1.395)node{$\bullet$};
\draw
(0.375, -0.650)--(0.750, -1.299)node{$\bullet$};
\draw
(0.375, -0.650)--(1.060, -0.955)node{$\bullet$};
\draw
(0.750, -0.000)--(1.357, -0.441)node{$\bullet$};
\draw
(0.750, -0.000)--(1.500, -0.000)node{$\bullet$};
\draw
(0.750, -0.000)--(1.357, 0.441)node{$\bullet$};
    \end{tikzpicture}
\end{center}

\begin{theorem}
Let $U$ be a unicyclic graph as pictured above with cycle of length $2q$ where each vertex of the cycle has at least $s\geq q$ pendants attached to it, and $\ell$ total pendants.  Then $i_-(\Delta)=\ell$.
\end{theorem}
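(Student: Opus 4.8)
The plan is to reduce $\Delta$ to a small matrix by collapsing each group of pendants, then apply the Haynsworth formula using the cycle block, whose inverse and inertia are already under control, and finally count signs via circulant eigenvalues.

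First I would group the pendant vertices by the cycle vertex to which they attach, writing $\Delta = \begin{bmatrix} \Delta_C & X \\ X^\top & \Delta_P\end{bmatrix}$, where $\Delta_C$ is the $2q \times 2q$ distance squared matrix of the cycle. Among the pendants hanging off a fixed cycle vertex $i$, all rows of $\Delta$ agree outside the group and the within-group block is $4(J_{s_i}-I_{s_i})$. An orthogonal change of basis on each group that isolates the all-ones direction is a congruence (so inertia is preserved by Theorem \ref{thm:sylvester}), and it decouples every ``difference'' direction, each contributing the block $-4I_{s_i-1}$; this is in effect the repeated application of Proposition \ref{lem:kpend}. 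After a diagonal rescaling of the surviving averaged coordinate in each group, I obtain $\Delta \cong (-4 I_{\ell - 2q}) \oplus R$, where $R$ is a $4q \times 4q$ matrix on the cycle vertices together with one rescaled pendant per vertex: its pendant diagonal entries are $4 - 4/s_i$, its pendant--pendant off-diagonal entries are $(d_{ij}+2)^2$, and its cycle--pendant entries are $(d_{ai}+1)^2$. Hence $i_-(\Delta) = (\ell - 2q) + i_-(R)$, and it remains to prove $i_-(R) = 2q$.

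Next I would eliminate the cycle block from $R$ using the Haynsworth additivity formula (Theorem \ref{thm:haynsworth_additivity_formula}) with $H_{11} = \Delta_C$. By Corollary \ref{cor:cycle_eigenvalues} the even cycle satisfies $i(\Delta_C) = (q, q, 0)$, so $\Delta_C$ is invertible and its inverse is given explicitly by Lemma \ref{lem:inverse_even_cycle}. Writing $R = \begin{bmatrix}\Delta_C & Y \\ Y^\top & C'\end{bmatrix}$, this gives $i_-(R) = q + i_-(S)$, where $S = C' - Y^\top \Delta_C^{-1} Y$ is $2q \times 2q$. The key structural observation is that $Y$ and the off-diagonal of $C'$ are independent of the pendant counts, so $S = S_0 + \mathrm{diag}(4 - 4/s_1, \ldots, 4 - 4/s_{2q})$ for a fixed matrix $S_0$. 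Thus the theorem reduces to showing $i_-(S) = q$ whenever each $s_i \geq q$. Finally I would exploit the monotone dependence of $S$ on its diagonal: increasing any $s_i$ adds a positive semidefinite multiple of $e_i e_i^\top$, so by Weyl's inequalities $i_-(S)$ is non-increasing in each $s_i$. Consequently, over the region $s_i \geq q$, the value $i_-(S)$ is squeezed between its value at the uniform ``lower corner'' $s_i \equiv q$ (diagonal $4 - 4/q$) and the uniform ``upper corner'' $s_i \to \infty$ (diagonal $4$). Both corners are circulant, so $S$ has the block-circulant form of Subsection \ref{sus:saturated_cycles} and its eigenvalues are computable via Lemma \ref{lem:eigenstuff_of_circulant_matrices}; showing that exactly $q$ of the $2q$ eigenvalues are negative at each corner forces $i_-(S) = q$ throughout, and hence $i_-(\Delta) = (\ell - 2q) + q + q = \ell$.

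I expect the main obstacle to be the sign count at the two circulant corners. This is a more delicate version of the analysis in Subsection \ref{sus:saturated_cycles}: after evaluating the circulant eigenvalues of $S$ one must locate the roots of the associated continuous eigenvalue function, and the hypothesis $s \geq q$ should be exactly the threshold guaranteeing that precisely $q$ of them are negative (fewer pendants would push a root past an integer and create extra negative eigenvalues, consistent with the fact that the conclusion can fail when $s < q$). A secondary technical point is carrying out the decoupling and rescaling of Step one carefully enough that $S_0$ is genuinely independent of the $s_i$, since that independence is precisely what makes the monotonicity squeeze available.
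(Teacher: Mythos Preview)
Your first reduction step (collapsing each pendant group to one representative and peeling off a negative-definite block of size $\ell - 2q$) matches the paper's exactly; both arrive at the same $4q \times 4q$ matrix $R = \begin{bmatrix}\Delta_C & Y\\ Y^\top & C'\end{bmatrix}$ with $C'$ differing from the saturated-cycle $C$ only on its diagonal. From here the paper takes a more direct route: assuming the pendant counts are uniform ($s_i \equiv k$), it observes that $R$ itself is block-circulant and reads off its eigenvalues via formula~(\ref{eq:1}) of Subsection~\ref{sus:saturated_cycles}, with the modified $\rho_j' = \rho_j + 4 - 4/k$. The key threshold computation---when the ``positive case'' eigenvalue for odd $j$ is nonnegative---reduces to $k \geq m\sin^2(\pi j/2m)$, whose right side is at most $m=q$, so the hypothesis $k \geq q$ is exactly what is needed. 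There is no Haynsworth step and no monotonicity argument.

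Your detour through the Schur complement $S$ is legitimate but not needed: the monotone diagonal dependence you exploit in $S$ is already present in $R$ (only the pendant-diagonal entries change with the $s_i$), so the Weyl squeeze can be run directly on $i_-(R)$ without ever inverting $\Delta_C$. More seriously, your choice of ``upper corner'' $s_i \to \infty$ is a genuine gap. A short circulant computation shows that for even $j \neq 0$ the $j$th eigenvalue of $S(k,\ldots,k)$ is exactly $-4/k$, which tends to $0$; thus $i_-(S_\infty) < q$ and the squeeze does not close at that end. The fix is to take the finite upper corner $N = \max_i s_i$: then $R(N,\ldots,N)$ is honestly block-circulant with $N \geq q$, and the same threshold analysis gives $i_-(R(N,\ldots,N)) = 2q$. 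With that correction your argument does go through and even handles non-uniform pendant counts---but notice the squeeze now demands the uniform result for \emph{every} $k$ between $q$ and $N$, so the core content is still precisely the paper's block-circulant sign count.
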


\begin{proof}
Note that more than one pendant is attached to each vertex of the cycle. A distance squared matrix of this graph can take the form
\begin{equation*}
    H = \begin{bmatrix}
        A & B & X_{13} & X_{14} & \cdots & X_{1,k+1} \\
        B & C & X_{23} & X_{24} & \cdots & X_{2,k+1} \\
        X_{13} & X_{23} & 4J - 4I & X_{34} & \cdots & X_{3,k+1} \\
        X_{14} & X_{24} & X_{34} & 4J - 4I & \cdots & X_{4,k+1} \\
        \vdots & \vdots & \vdots & \vdots & \ddots & \vdots \\
        X_{1,k+1} & X_{2,k+1} & X_{3,k+1} & X_{4,k+1} & \cdots & 4J - 4I
    \end{bmatrix}
\end{equation*}
where $A$, $B$, and $C$ have the same definitions as above and $s$ is the number of pendants attached to each vertex of the cycle. Now we will deal with the matrices which take the form $X_{ij}$. When $i = 1$, $X_{ij}$ is indexed by the connections from the vertices of the cycle and one set of nodes that are connected to a vertex of the cycle except for one node. When $i = 2$, $X_{ij}$ is indexed by the connections from the vertices of a set of $p$ nodes which contains one node from each group of nodes connected to a vertex of the cycle and one set of nodes that are connected to a vertex of the cycle except for one node. The rest of the $X_{ij}$ matrices are of size $k-1 \times k-1$ which contain all the same number and are indexed by groups of nodes (besides one) connected to one vertex of the cycle to other groups of nodes (besides one) connected to another vertex of the cycle. 

Using equivalent row and column operations, we can find the following congruent matrix to $H$. 
\begin{equation*}
    H' = \begin{bmatrix}
        A & B & 0 & 0 & \cdots & 0 \\
        B & C' & 0 & 0 & \cdots & 0 \\
        0 & 0 & -4I - 4J & 0 & \cdots & 0 \\
        0 & 0 & 0 & -4I - 4J & \cdots & 0 \\
        \vdots & \vdots & \vdots & \vdots & \ddots & \vdots \\
        0 & 0 & 0 & 0 & \cdots & -4I - 4J
    \end{bmatrix}
\end{equation*}
where $C' = C + (4 - 4/k)I$. 

Since $-4I - 4J$ is negative definite, all of its eigenvalues will be negative. The $-4I - 4J$ groups will correspond to the all the pendants on the cycle except for one set. The proof in similar to what is in section \ref{sus:saturated_cycles}, except that $C$ has been slightly modified. The definitions of $\lambda$ and $\rho$ will remain the same as the above sections. With $C'$, though, we'll have 
\begin{equation*}
    \rho = -4 + m \csc\left(\frac{\pi j}{2m}\right)^2 + 4 - \frac 4 k = m \csc\left(\frac{\pi j}{2m}\right)^2 - \frac 4 k
\end{equation*}
when $j$ is even and not 0 and 
\begin{equation*}
    \rho = -4 + (-4 - m) \csc\left(\frac{\pi j}{2m}\right)^2 + 4 - \frac 4 k = (-4 - m) \csc\left(\frac{\pi j}{2m}\right)^2 - \frac 4 k
\end{equation*}
when $j$ is odd.

Now we will determine the signs of the rest of the eigenvalues of $H$. When the index $j$ is even, it will be similar to when $j$ is even in section \ref{sus:saturated_cycles}. However, things change when $j$ is odd. Using this new definition of $\rho$ which corresponds with $j$ being odd in positive case of equation (\ref{eq:1}), gives us
\begin{equation*}
    \frac{1}{2}\left(-\frac{4}{k}-2\left(2+m\right)\csc\left(\frac{\pi j}{2m}\right)^{2}+\sqrt{\frac{16}{k^{2}}+\frac{32}{k}\csc\left(\frac{\pi j}{2m}\right)^{2}+4\left(8+m\left(4+m\right)\right)\csc\left(\frac{\pi j}{2m}\right)^{4}}\right).
\end{equation*}
Setting this expression equal to zero and solving for $k$ gives
\begin{equation*}
    k = m\sin\left(\frac{\pi j}{2m}\right)^2
\end{equation*}
Thus, $k$ needs to be at least $m$ to ensure that none of the eigenvalues that come from this will be negative. 

Since $\rho$ and $\lambda$ are negative, the result will be negative in the negative case of equation (\ref{eq:1}). 

Thus, half the eigenvalues returned from (\ref{eq:1}) will be negative and the other half will be non-negative when $j$ is even and when $j$ is odd. Thus,

\begin{equation*}
    i_-(\Delta) = \ell. 
\end{equation*}

\end{proof}

Using various techniques touched on throughout this paper, it should be possible to show that no matter however many trees are attached to the pendants of the particular graph from this section, the number of negative eigenvalues will correspond to its leaves. 

\subsection{Even cycles with opposite pendants}\label{subsec:opp}

Let $T$ be a graph on $n > 2$ vertices which has a cycle of $2k$ vertices and exactly two pendants. One will be opposite of the other. Let $\Delta$ be the distance squared matrix of $T$. Let the ordering of the vertices go from 0 to 2k around the cycle, then connect the next vertex to the 0th vertex and the next vertex to the $k$th vertex. Let $x$, $y$, $z$, and $z$ be the columns of $\Delta$ which correspond to the 0th, $k$th, $(2k)$th, and $(2k+1)$th columns respectively. Then we will have
\begin{equation*}
    \z{x}
    =
    \begin{bmatrix}
    0 \\ 1 \\ \vdots \\ (k - 1)^2 \\ k^2 \\ (k - 1)^2 \\ \vdots \\ 4 \\ 1 \\ 1 \\ (k + 1)^2
    \end{bmatrix}
    ,\quad \quad
    \z{y} = 
    \begin{bmatrix}
    k^2 \\ (k - 1)^2 \\ \vdots \\ 1 \\ 0 \\ 1 \\ \vdots \\ (k - 2)^2 \\ (k - 1)^2 \\ (k + 1)^2 \\ 1
    \end{bmatrix}
    ,\quad \quad
    \z{z} = 
    \begin{bmatrix}
    1 \\ 4 \\ \vdots \\ k^2 \\ (k + 1)^2 \\ k^2 \\ \vdots \\ 9 \\ 4 \\ 0 \\ (k + 2)^2
    \end{bmatrix}
    ,\quad \quad
    \z{w} = 
    \begin{bmatrix}
    (k + 1)^2 \\ k \\ \vdots \\ 4 \\ 1 \\ 4 \\ \vdots \\ (k - 1)^2 \\ k^2 \\ (k + 2)^2 \\ 0
    \end{bmatrix}
\end{equation*}
Generalize this to $k - i$ for the $n - 2$ vertices. 

We want to show $\z{x} - \z{y} - \frac{k}{k + 2}(\z{z} - \z{w}) = 0$. Notice that when we take the first entry in these vectors and perform the operation, we get
\begin{align*}
    0 - k^2 - \frac{k}{k + 2}(1 - (k + 1)^2) 
    &= -k^2 - \frac{k}{k + 2}(-k^2 - 2k) \\
    &= -k^2 + \frac{k}{k + 2}(k(k + 2)) \\
    &= -k^2 + k^2 \\
    &= 0
\end{align*}
We can do a similar operation to all other entries. Thus it is possible to find a linear combination of three of the vectors above to get the other. Therefore, $\Delta$ will have at least one 0 eigenvalue.

\subsection{Triangle with one tree}\label{Triangle}

\begin{theorem}\label{thm:one_tree_cycle}
Let $U$ be a graph on $n > 2$ vertices which has a cycle of 3 vertices and a tree connected to one of the vertices of the cycle. Let $\Delta$ be the distance squared matrix of $U$. Then
\begin{equation*}
    i_-(\Delta) = \ell + 2,
    \quad \quad \quad
\end{equation*}
where $\ell$ is the number of leaves of $U$.
\end{theorem}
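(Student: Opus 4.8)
The plan is to exploit the symmetry of $U$ and reduce the problem to the tree result, Theorem \ref{thm:tree_inertia}, rather than re-running a full induction. Write the triangle as $\{c_1,c_2,c_3\}$ with the tree attached at $c_1$, and let $\sigma$ be the automorphism of $U$ that swaps $c_2$ and $c_3$ and fixes every other vertex. Because the tree hangs off a single cycle vertex, $\sigma$ is always an automorphism, so its permutation matrix commutes with $\Delta$ and $\Delta$ respects the splitting $\RR^n = \langle \mathbf{e}_{c_2}-\mathbf{e}_{c_3}\rangle \oplus \{x : x_{c_2}=x_{c_3}\}$. A one-line computation gives $\Delta(\mathbf{e}_{c_2}-\mathbf{e}_{c_3}) = -(\mathbf{e}_{c_2}-\mathbf{e}_{c_3})$, so the antisymmetric part always contributes exactly one negative eigenvalue; hence $i_-(\Delta) = 1 + i_-(\Delta|_{\mathrm{sym}})$, where $\Delta|_{\mathrm{sym}}$ is the restriction to the $(n-1)$-dimensional symmetric subspace.

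Next I would compute $\Delta|_{\mathrm{sym}}$ explicitly. Folding $c_2,c_3$ into the single coordinate $g=\frac{1}{\sqrt2}(\mathbf{e}_{c_2}+\mathbf{e}_{c_3})$ and using that $d(t,c_2)=d(t,c_3)=d(t,c_1)+1$ for every tree vertex $t$, the matrix of $\Delta|_{\mathrm{sym}}$ keeps the tree's squared-distance entries in the old coordinates, has $\sqrt2\,(d(t,c_1)+1)^2$ in the $g$-column, and a $1$ on the $g$-diagonal. Scaling the $g$-coordinate by $1/\sqrt2$ is a congruence, so by Theorem \ref{thm:sylvester} it preserves inertia and turns this into $\Delta_{T^{*}} + \tfrac12\,\mathbf{e}_{c^{*}}\mathbf{e}_{c^{*}}^{\top}$, where $T^{*}$ is the tree obtained by contracting the triangle to the edge $c_1 c^{*}$ (replacing the two symmetric cycle vertices by a single vertex $c^{*}$ adjacent to $c_1$). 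By Theorem \ref{thm:tree_inertia} applied to $T^{*}$, whose leaf count is $\ell+1$ (the leaves of $U$ together with $c^{*}$, since $c_1$ still has degree $\geq 2$), we obtain $i_-(\Delta_{T^{*}})=\ell+1$. The bare triangle and the degenerate case $T^{*}\cong P_2$ are checked directly from Corollary \ref{cor:cycle_eigenvalues}.

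It then remains to show that the rank-one positive semidefinite perturbation $\tfrac12\,\mathbf{e}_{c^{*}}\mathbf{e}_{c^{*}}^{\top}$ does not change $i_-$. The bound $i_-(\Delta_{T^{*}}+\tfrac12\,\mathbf{e}_{c^{*}}\mathbf{e}_{c^{*}}^{\top}) \leq \ell+1$ is immediate from subadditivity of inertia, so the whole theorem reduces to the reverse inequality $i_-(\Delta_{T^{*}}+\tfrac12\,\mathbf{e}_{c^{*}}\mathbf{e}_{c^{*}}^{\top}) \geq \ell+1$; equivalently, the bump at the leaf $c^{*}$ must not lift a negative eigenvalue to $0$. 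When $c_1$ has a single neighbor in the tree this is clean: restricting to $\{x_{c^{*}}=0\}$ recovers the squared-distance matrix of a tree with $\ell+1$ leaves, which by Theorem \ref{thm:tree_inertia} has an $(\ell+1)$-dimensional negative-definite subspace on which the perturbation acts trivially, yielding the bound.

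The main obstacle is the remaining case, where $c_1$ has two or more tree-neighbors: there the block with $x_{c^{*}}=0$ exhibits only $\ell$ negative directions, so one must genuinely control the bumped direction. Here I would peel $c^{*}$ off with the Haynsworth formula (Theorem \ref{thm:haynsworth_additivity_formula}) and reduce to a sign inequality of the form $\tfrac12 - \mathbf{y}^{\top}\Delta_{T}^{-1}\mathbf{y} < 0$, with $\mathbf{y}$ the squared-distance vector from $c^{*}$ — the tree analog of Lemma \ref{lem:x_delta_inverse_x_positive}. Note the sign is \emph{opposite} to that lemma, reflecting that attaching a pendant to a leaf keeps $i_-$ fixed rather than increasing it, and that no null vector is available (unlike the even-cycle Lemma \ref{lem:one_deg_2}, the triangle forces $i_0$ to behave as for a tree). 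The delicate point is that $\Delta_T$ need not be invertible, so I expect to establish this sign using the explicit inverse formulas for tree squared-distance matrices from \cite{BAPAT2016328}, or else to fall back on a direct induction mirroring Theorem \ref{thm:p_cycle_one_tree}, in which precisely this quadratic-form sign is the one delicate step of the $k=1$ single-branch case.
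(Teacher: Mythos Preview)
Your approach is genuinely different from the paper's. The paper runs a direct induction on $n$, re-doing the case analysis of Theorem~\ref{thm:tree_inertia}: a $k\ge 2$ pendant case via Proposition~\ref{lem:kpend} and subadditivity, a $k=1$ case via the rank argument of Lemma~\ref{lem:tree_null}, plus explicit base cases for $n=3,4$ and the diameter-$2$ star. You instead exploit the $c_2\leftrightarrow c_3$ automorphism to split $\Delta$ into a one-dimensional antisymmetric piece (eigenvalue $-1$) and an $(n-1)$-dimensional symmetric piece which, after a congruence, becomes $\Delta_{T^*}+\tfrac12\,\mathbf{e}_{c^*}\mathbf{e}_{c^*}^{\top}$ for the tree $T^*$ obtained by collapsing the triangle to an edge. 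This is more structural: it explains $i_-(\Delta)=\ell+2$ as one negative eigenvalue from the triangle's antisymmetric mode plus $\ell+1$ from the folded tree with $\ell+1$ leaves, and it reuses Theorem~\ref{thm:tree_inertia} as a black box rather than reproving it.

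Your reduction is correct, and the upper bound $i_-\bigl(\Delta_{T^*}+\tfrac12\,\mathbf{e}_{c^*}\mathbf{e}_{c^*}^{\top}\bigr)\le \ell+1$ follows from subadditivity as you say. The gap you flag---the matching lower bound when $c_1$ has at least two tree neighbours---is real, and neither of your proposed fixes is satisfactory: $\Delta_T$ is typically singular, so a Haynsworth argument via $\Delta_T^{-1}$ does not apply directly, and ``falling back on induction'' forfeits the point of the reduction. (Incidentally, your remark about the sign being \emph{opposite} to Lemma~\ref{lem:x_delta_inverse_x_positive} is off: you want $\tfrac12-\mathbf{y}^{\top}\Delta_T^{-1}\mathbf{y}<0$, which is the \emph{same} sign as there, and it reflects that $i_-$ \emph{does} increase by one in passing from $\Delta_T$ to your $M$.)

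However, the gap closes cleanly within your own framework. Adjoin a second pendant $c^{**}$ at $c_1$ to form the tree $T^{**}$, which has $\ell+2$ leaves, so Theorem~\ref{thm:tree_inertia} gives $i_-(\Delta_{T^{**}})=\ell+2$. Proposition~\ref{lem:kpend} with $k=2$ (the two pendants being $c^*$ and $c^{**}$) yields
\[
\ell+2 \;=\; i_-(\Delta_{T^{**}}) \;=\; 1 + i_-\!\left(\Delta_{T^*}+2\,\mathbf{e}_{c^*}\mathbf{e}_{c^*}^{\top}\right),
\]
hence $i_-\bigl(\Delta_{T^*}+2\,\mathbf{e}_{c^*}\mathbf{e}_{c^*}^{\top}\bigr)=\ell+1$. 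Since $\tfrac32\,\mathbf{e}_{c^*}\mathbf{e}_{c^*}^{\top}$ is positive semidefinite, subadditivity gives
\[
\ell+1 \;=\; i_-\!\left(\Delta_{T^*}+2\,\mathbf{e}_{c^*}\mathbf{e}_{c^*}^{\top}\right)\;\le\; i_-\!\left(\Delta_{T^*}+\tfrac12\,\mathbf{e}_{c^*}\mathbf{e}_{c^*}^{\top}\right),
\]
which is exactly the missing lower bound, uniformly in both your ``easy'' and ``hard'' cases. With this one extra step your argument is complete and avoids the paper's case analysis entirely.
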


\begin{proof}
Use induction on $n$. For $n = 3$, we know from Corollary \ref{cor:cycle_eigenvalues} that there will be 2 negative eigenvalues of $\Delta$ which satisfies the conclusion of the theorem. 

When $n = 4$, we have a cycle of 3 vertices, one of which is connected to exactly one other vertex not contained in the cycle. The distance squared matrix of this graph is then

\begin{equation*}
    \Delta = \begin{bmatrix}
        0 & 1 & 4 & 4 \\
        1 & 0 & 1 & 1 \\
        4 & 1 & 0 & 1 \\
        4 & 1 & 1 & 0 \\
    \end{bmatrix},
\end{equation*}
resulting in $i_-(\Delta) = 3 = \ell + 2$. 

Let $U$ be a unicyclic graph on $n > 4$ and observe diam($T$)$\geq2$.
If diam($T$) = 2, then
\begin{equation*}
    \Delta = \begin{bmatrix}
        4J - 4I & \textbf{1} & 4\textbf{1} & 4\textbf{1} \\
        \textbf{1}^T & 0 & 1 & 1 \\
        \textbf{4}^T & 1 & 0 & 1 \\
        \textbf{4}^T & 1 & 1 & 0 \\
    \end{bmatrix},
\end{equation*}
where $4J - 4I$ is $(n - 3) \times (n - 3)$.
We will have that $i_-(4J - 4I) = n - 4$ ($-4$ with a multiplicity of $n - 4$) and $i_+(4J - 4I) = 1$ ($4(n - 2)$). $\Delta$ is equivalent (performing similar row and column operations) to 
\begin{equation*}
    \begin{bmatrix}
        4J - 4I & \textbf{0} & \textbf{0} & \textbf{0} \\
        \textbf{0}^T &  -\cfrac{n - 3}{4(n - 4)} & 1 & 1 \\
        \textbf{0}^T & 1 & -8 & -7 \\
        \textbf{0}^T & 1 & -7 & -8
    \end{bmatrix}.
\end{equation*}

Continuing to perform similar row and column operations, we can diagonalize the upper left $3 \times 3$ matrix, giving us
\[\begin{bmatrix}
    4J - 4I & \textbf{0} & \textbf{0} & \textbf{0} \\
    \textbf{0}^T & -\cfrac{7n - 13}{60(n - 4)} & 0 & 0\\
    \textbf{0}^T & 0 & -15/8 & 0\\
    \textbf{0}^T & 0 & 0 & -8\\
\end{bmatrix}.\]
As $n > 4$, the lower right $3 \times 3$ matrix only has negative eigenvalues, so we have that $i_-(\Delta) = 3 + (n - 4) = n - 1 = \ell + 2$ and $i_0(\Delta) = 0$. 

Thus from here we assume $\text{diam}(U) \geq 3$. Consider a diametrical path in $U$. There exists a vertex $w$ such that $w$ has exactly one non-leaf neighbor $u$. Let $w$ be adjacent to $k$ leaves, and label leaves $1, \ldots, k$, $w$ is $k + 1$, and $u$ is $k + 2$. 

\textbf{Case 1:} Assume $k \geq 2$. 
By the proof of Proposition \ref{lem:kpend},
\begin{equation*}
    A = \begin{bmatrix}
        -4I_{k - 1} - 4 J_{k - 1} & 0 \\
        \\
        0 & \left(4 - 4/k\right)e_ke_k^T + \Delta_k
    \end{bmatrix},
\end{equation*} 
where $\Delta_k$ is the square distance matrix for $U \setminus \{1, \ldots, k - 1\}$. Notice that $-4I_{k - 1} - 4 J_{k - 1}$ is negative definite and so has $k - 1$ negative eigenvalues. Let $\left(4 - 4/k\right)e_ke_k^T + \Delta_k = B$ and notice $B(1) = \Delta_{k + 1}$ where $\Delta_{k + 1}$ is the distance squared matrix for $U \setminus \{1, \ldots, k\}$. Notice that $U \setminus \{1, \ldots, k\}$ is a unicyclic graph with $\ell - (k - 1)$ leaves and at least 4 vertices. By our inductive hypothesis, we have 
$i_-(\Delta_{k + 1}) = \ell - (k - 1) + 2.$
By interlacing, 
$i_-(B) \geq \ell - (k - 1) + 2.$
Since $B = \left(4 - \frac{4}{k}\right)e_ke_k^T + \Delta_k$ and $4 - \frac{4}{k} > 0$, we have 
\begin{align*}
    i_-(B) &\leq i_-\left(\left(4 - \frac{4}{k}\right)e_ke_k^T\right) + i_-(\Delta_k) \\
    &= 0 + \ell - (k - 1) + 2 \\
    &= \ell - (k - 1) + 2.
\end{align*}
(We used the inductive hypothesis on $U \setminus \{1, \ldots, k - 1\}$ to find $i_-(\Delta_k)$.) Thus $i_-(B) = \ell - (k - 1) + 2$. Furthermore, 
\begin{align*}
    i_-(A) &= i_-(-4I_{k - 1} - 4J_{k - 1}) + i_-(B) \\
    &= k - 1 + \ell - (k - 1) + 2 \\
    &= \ell + 2.
\end{align*}

\textbf{Case 2:} Assume $k = 1$. Because diam$(U) \geq 3$, we know that there exists some vertex $y$ that is adjacent to a vertex in the cycle (call this vertex $x$) and has at least one other neighbor $z$. 
\begin{center}
\begin{tikzpicture}

\draw (0,0)--(-3/4,-1/2)node{$\bullet$}--(-3/4,1/2)node{$\bullet$}--(0,0)node{$\bullet$}--(3/4,0)node{$\bullet$}--(6/4,0)node{$\bullet$}--(7/4,0) (9/4,0)node{. . .} (11/4,0)--(12/4,0)node{$\bullet$}--(15/4,0)node{$\bullet$}--(18/4,0)node{$\bullet$} (0,-1/3)node{$x$} (3/4,-1/3)node{$y$} (6/4,-1/3)node{$z$} (12/4,-1/3)node{$u$} (15/4,-1/3)node{$w$} (18/4,-1/3)node{$v$};
    
\end{tikzpicture}
\end{center}
We again consider two distinct cases.

\textbf{Case 2a:} Suppose $x \neq u$. Then diam$(U) \geq 4$. We let $\Delta_2 = \Delta(1)$. Now using our inductive hypothesis, we have $i_-(\Delta_2) = \ell + 2$. So, by interlacing, we get $i_-(\Delta) \geq \ell + 2$. Then, once again making use of Lemma \ref{lem:tree_null}, we see that the columns of $\Delta$ corresponding to $x, y, z$ with weights $1, -2, 1$ form the all $2$s vector. Thus rank$\Delta = \text{rank}\Delta_2$ and, therefore, $i_-(\Delta) = \ell + 2$.

\textbf{Case 2b:} Suppose diam$(U) = 3$, with $u = x$ a part of the cycle, $w$ its neighbor, and $v$ the only leaf adjacent to $w$.

\begin{center}
\begin{tikzpicture}

\draw (0,0)--(-3/4,-1/2)node{$\bullet$}--(-3/4,1/2)node{$\bullet$}--(0,0)node{$\bullet$}--(3/4,0)node{$\bullet$}--(6/4,0)node{$\bullet$}(0,-1/3)node{$u$} (3/4,-1/3)node{$w$} (6/4,-1/3)node{$v$};

\end{tikzpicture}
\end{center}

By direct computation, we have that $i_-(\Delta) = 3 = \ell + 2$ as desired. 

\end{proof}

\section{Conclusion and Further Research}\label{sec:conc}

We have shown that some specific information about the inertia of the distance squared matrix can be given when there are pendant vertices or vertices of degree 2 whose removal disconnects the graph.  From this fact, we gave an alternate proof to the theorem of \cite{BAPAT2016328} which determines the inertia of the distance squared matrix of a tree.  We were able to further use these tools to prove an analogous theorem for unicyclic graphs whose cycle is even and have one tree coming off of a single vertex.  We have also used these tools for a few other specific families of unicyclic graphs.  

Examples in Section \ref{sec:ex} show us that our main theorem cannot generalize to arbitrary unicyclic graphs.  However, it is possible that general bounds could be proven.  Based on results from Section
\ref{subsec:manypendant}, we see the $i_-(\Delta)$ can be as small as $\ell$, the number of leaves, and the largest $i_-(\Delta)$ we have seen is  $\ell+q$ where $q$ is the number of negative eigenvalues coming from the cycle itself.  Thus we could conjecture that
\[
\ell\leq i_-(\Delta)\leq \ell+q.
\]
Possible directions for further research could include proving these bounds for general unicyclic graphs, or finding explicit values for other specific families of unicyclic graphs.  Section \ref{subsec:opp} shows that the question of finding $i_0(\Delta)$ can have many subtleties that could make a general theorem difficult.  This is another possible avenue for future research.

\bibliographystyle{plain}
\bibliography{refs}

\end{document}